\newtheorem{thm}{Theorem}[section]
\newtheorem{cor}[thm]{Corollary}
\newtheorem{prop}[thm]{Proposition}
\newtheorem{lem}[thm]{Lemma}
\newtheorem{quest}[thm]{Question}
\theoremstyle{definition}
\newtheorem{defn}[thm]{Definition}
\newtheorem{exmp}[thm]{Example}
\theoremstyle{remark}
\newtheorem{rem}[thm]{Remark}
\let\c@equation\c@thm
\numberwithin{equation}{section}
\title{Recurrent Operators on Function Spaces}
\begin{document}
\author{Gabriela Bulancea and H\'ector N. Salas}

\title{Recurrent Operators on Function Spaces}

\address{Department of Mathematical Sciences\\George Mason University\\4400 University Dr\\Fairfax, VA
22030} \email{gbulance@gmu.edu}
\address{Department of Mathematical Sciences\\University of Puerto Rico, Mayag\"{u}ez, PR 00681-9018} \email{hector.salas@upr.edu}
\thanks{}
\subjclass[2000]{Primary 37B20, 47B38. Secondary: 30D05, 47A16, {46J10, 15A18}} 
\keywords{Recurrent operator.  Iteration of composition operators with linear fractional symbols. Hypercyclic operators. Rigid and uniformly rigid operators.}
\date{}

\begin{abstract}
Although recurrence for dynamical systems has been studied since the end of the nineteenth century, the study of recurrence for linear operators started with papers by Costakis, Manoussos and Parissis in 2012 and 2014. We explore recurrence in Banach algebras,
in the space of continuous linear operators on $\omega=\mathbb{C}^{\mathbb{N}},$ and for composition operators whose symbols are linear fractional transformations, acting on weighted Dirichlet spaces. In particular, we show that  composition operators with a parabolic non automorphism symbol are never recurrent. Our results relate to work by Gallardo-Guti\'errez and Montes-Rodr\'iguez in their 2004 AMS Memoir in which they show, among other things, that composition operators induced by parabolic non automorphisms are never hypercyclic in weighted Dirichlet spaces. Our work also relates to a more recent paper by Karim, Benchiheb, and Amouch.

\end{abstract}
\maketitle
\tableofcontents
\section {Introduction}

Recurrence is a very fruitful notion in dynamical systems. Poincar\'{e}'s recurrence theorem
in the late nineteenth century was the initial landmark. Let $f:X\longrightarrow X$ and for each $x\in X$ consider the orbit $Orb(f,x)=\{f^n(x):n=0,1,2,3,\cdots\}.$ The interesting case is when $X$ has some kind of structure: measure space, topological space, etc and $f$ has the adequate property as volume preserving (measure preserving), continuous, etc. A vector $x$ is {\it recurrent} if it visits the appropriate subsets containing $x$ infinitely often. The function $f$ is {\it recurrent} if ``almost" any vector is recurrent. 
The study of recurrence in the linear setting was initiated by Costakis, Manoussos and Parissis in \cite{Costakis} and Costakis and Parissis in \cite{Parissis}. Another paper we considered while working on this topic is the paper of  Karim, Benchiheb, and Amouch \cite{Karim}. Our subject was expounded by Grosse-Erdmann in his talk {\it Recurrent Operators} presented in the online conference Four Days in Linear Dynamics organized at the University of Campania Luigi Vanvitelli in June 2021.\\
\par
Here are the precise definitions of recurrence and two stronger forms of recurrence, rigidity and uniform rigidity of operators acting on a Banach space $X$, as they appear in \cite{Costakis}. 

\begin{defn} An operator $T$ acting on $X$ is called recurrent if for every open set
$U\subset{X}$ there exists some $k\in{\mathbb{N}}$ such that
$$U \cap{T ^{-k} (U)}\ne{\emptyset}.$$
A vector $x\in{X}$ is called recurrent for $T$ if there exists a strictly increasing sequence
of positive integers $(k_n)_{n\in{\mathbb{N}}}$ such that
$T^{k_n}x\rightarrow{x}$,
as $n\rightarrow{\infty}$. We will denote by Rec$(T)$ the set of recurrent vectors for $T$.
\end{defn}
\begin{rem}
    The space $X$ in the definition above could also be a Fr\'{e}chet space.
\end{rem}
\begin{defn}
    An operator $T$ acting on $X$ is called rigid if there exists an increasing
sequence of positive integers $(k_n)_{n\in{\mathbb{N}}}$ such that
$T^{k_n} x\rightarrow{x}$ for every $x\in{X}$,
i.e., $T^{k_n}\rightarrow{I}$ in the strong operator topology (SOT).
\\
An operator $T$ acting on $X$ is called uniformly rigid if there exists an increasing
sequence of positive integers $(k_n)_{n\in{\mathbb{N}}}$ that goes to $\infty$ such that
	$$||T^{k_n}-I||=\sup_{||x||\le{1}}{||T^{k_n}x-x||}\rightarrow{0}.$$
\end{defn}

In the following, we will be using the characterization of recurrent operators in Proposition 2.1 in \cite{Costakis} that states that the operator $T$ on the Banach space $X$ is recurrent if and only if $\overline{\text{Rec}(T)}=X$.

Trivial examples of recurrent operators are $e^{i2\pi c} I$
(a multiply of the identity); when $c\in \mathbb Q$ the operator is periodic, whereas when $c \in \mathbb R \setminus \mathbb Q$, it is not periodic, but Kroenecker's theorem does the job to show that it is still recurrent. Direct sums of these operators are also recurrent.

A  related but more restrictive notion than recurrence is  hypercyclicity, which is a very active field of research, see the book by Bayart and Matheron \cite{Bayart}. A continuous linear operator $T:X\longrightarrow{X}$,
where $X$ is a Banach space or, more generally, a topological vector space, is hypercyclic if each point in a dense subset of $X$ has a dense orbit. Thus, if $T$ is hypercyclic on $X$ and $x\in{X}$ is such that $Orb(T,x)$ is dense in $X$, then for any neighborhood $V$ of $x$ a subsequence of the orbit is contained in $V$, which shows that $x$ is recurrent for $T$. Consequently, if $T$ is hypercyclic, then $T$ is also recurrent. In the traditional, non linear case, a recurrent point has a ``bounded" orbit; however, if $x$ is a hypercyclic vector, its orbit is not bounded. 

We now give an overview of the paper.

In the second section, we study recurrence of multiplication operators in the context of commutative Banach algebras. We show that the inverse of a uniformly rigid operator is also uniformly rigid. (See Question 2.20 in \cite{Costakis}.) In particular, we look at multiplication operators on the disk algebra $\mathcal{A}$ and on the algebra $H^\infty$.

In the third section, we characterize those recurrent operators on $\omega$ whose representing matrices are lower triangular. Recall that  the space $\omega=K^{\mathbb{N}}$ is the countably infinite product of $K$ endowed with the product topology, which is a Fr\'{e}chet space. The field $K$  could be either $\mathbb{C}$ or $\mathbb{R},$ but we work only with the complex numbers. 

The fourth section is an interlude in which we provide an example of a non-recurrent operator related to example 7.9 in \cite{Costakis}, but in which the Cantor set used has positive measure.

In the last section, we revisit the recurrence of composition operators with linear fractional transformation symbols (LFT) acting on weighted Dirichlet spaces $S_{\nu}$.
In \cite{Costakis}, the recurrence of these composition operators was studied in the Hardy space context, while their hypercyclic behavior on the spaces $S_\nu$ was studied in the monograph by Gallardo-Guti\'errez and Montes-Rodr\'iguez in \cite{Gallardo-Montes}. In particular, they show that when the symbol is a parabolic non-automorphism the operator is not hypercyclic (Chapter 4 of \cite{Gallardo-Montes}). Recently, Karim,
Benchiheb, and Amouch \cite{Karim}  provided a characterization of the recurrent composition operators on $S_\nu$. While the statements of their results are true, their proofs for two of the cases are incorrect. We provide the correct proofs for these cases. In particular, we prove that the operators induced by parabolic non-automorphisms are not recurrent, so, they are not hypercyclic. The proof, which is short and direct, is obtained by using the tools in the "An appetizer" section on page 52 in \cite{Gallardo-Montes}. We conclude with a discussion of the recurrence (or lack thereof) of composition operators on $\mathcal{A}$ and on the algebra $H^\infty$.
\\
\par
{\bf Notation:} We will be using the standard notation denoting the set of natural numbers by $\mathbb{N}$, the unit circle by $\mathbb{T}$, the open unit disk by $\mathbb{D}$, and the closed unit disk by $\overline{\mathbb{D}}$. The field of scalars in all our spaces is $\mathbb C.$

\section{Banach Algebras}
Let $A$ be a (commutative) Banach algebra with identity, which will be denoted by $1$. A good reference for commutative Banach algebras is the book by Stout \cite{Stout}. 
\par
An element $a\in{A}$ induces a multiplication operator on $A$, $M_a:A\rightarrow{A}$, $M_a(x)=ax$. Then $||M_a||=||a||$ and the spectrum of the operator $M_a$ coincides with the spectrum of $a$ as an element of the Banach algebra $A$. In particular, $M_a$ is invertible if and only if $a$ is invertible in $A$; when $a$ is invertible $(M_a)^{-1}=M_{a^{-1}}$. In the following, we are interested in studying the connection between the properties of $a$ as an element of the Banach algebra and the properties of the operator $M_a$ of being recurrent, rigid, or uniformly rigid. Notice that for each $a$ the zero vector is recurrent for $M_a$, while $M_0$ is never recurrent.
\par 
Since there are hypercyclic operators that are not invertible, there are recurrent operators that are not invertible. A nice example of such an operator is $\lambda{B}$, where $B$ is the backward shift on the Hardy space $H^2$ and $|\lambda|>1$. This is Rolewicz's Theorem, page 110 in Shapiro's book \cite{Shapiro}.
In \cite{Costakis}, the authors proved that if a recurrent operator is invertible, then its inverse is also recurrent. While uniformly rigid operators are invertible, it is an open question whether a rigid operator has to be invertible, which is part of question 2.20 in \cite{Costakis}. 
We can use the Newmann series to show that the inverse of a uniformly rigid operator is also uniformly rigid, answering the other part of the question 2.20 in \cite{Costakis}.

\begin{prop}
    Let $T$ be a uniformly rigid operator on the Banach space $X$. Then $T^{-1}$ is also uniformly rigid.
\end{prop}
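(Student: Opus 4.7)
The plan is to exploit that for large $n$ the operator $T^{k_n}$ is close to $I$ in norm, so $T^{-k_n}$ can be expanded as a Neumann series whose norm is controlled by $\|T^{k_n}-I\|$.

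First, since $T$ is uniformly rigid, I pick an increasing sequence $(k_n)$ with $k_n\to\infty$ and $\varepsilon_n:=\|T^{k_n}-I\|\to 0$. Throw away finitely many terms so that $\varepsilon_n<1/2$ for all $n$. The Neumann series then gives that $T^{k_n}=I-(I-T^{k_n})$ is invertible with
\begin{equation*}
(T^{k_n})^{-1}=\sum_{j=0}^{\infty}(I-T^{k_n})^{j}.
\end{equation*}
Because uniformly rigid operators are known to be invertible (already noted in the paragraph preceding the proposition), we have $(T^{k_n})^{-1}=T^{-k_n}=(T^{-1})^{k_n}$, so there is no issue identifying the Neumann inverse with a power of $T^{-1}$.

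Next I estimate
\begin{equation*}
\bigl\|(T^{-1})^{k_n}-I\bigr\|=\Bigl\|\sum_{j=1}^{\infty}(I-T^{k_n})^{j}\Bigr\|\le\sum_{j=1}^{\infty}\varepsilon_n^{j}=\frac{\varepsilon_n}{1-\varepsilon_n},
\end{equation*}
which tends to $0$ as $n\to\infty$. Since $(k_n)$ is an increasing sequence of positive integers going to infinity, this exhibits $(k_n)$ as a witness that $T^{-1}$ is uniformly rigid.

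There is no real obstacle here: the argument is essentially a one-line application of the Neumann series, and the mild bookkeeping is just to ensure $\varepsilon_n<1$ so that the series converges in operator norm. The only point worth double-checking is the identity $(T^{k_n})^{-1}=(T^{-1})^{k_n}$, which is immediate once one knows $T$ (hence every $T^{k_n}$) is invertible.
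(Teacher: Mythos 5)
Your proof is correct and follows essentially the same route as the paper's: expand $(T^{k_n})^{-1}$ as a Neumann series and bound $\|(T^{k_n})^{-1}-I\|$ by $\varepsilon_n/(1-\varepsilon_n)$ (the paper states the equivalent bound $2\varepsilon_n$ after discarding terms with $\varepsilon_n<1/2$). Your extra remark identifying $(T^{k_n})^{-1}$ with $(T^{-1})^{k_n}$ is a sensible bit of bookkeeping that the paper leaves implicit.
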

\begin{proof} Since $T$ is uniformly rigid, there is a sequence $(n_k)_{k=1}^\infty$ in $\mathbb{N}$, such that $||T^{n_k}-I||\rightarrow{0}$, as $k$ goes to infinity. Thus, there exists a positive integer $k_0\in{\mathbb{N}}$ such that $||T^{n_k}-I||<\frac{1}{2}$ for all $k>k_0$. For $k>k_0$, we have that $$(T^{n_k})^{-1}=\sum_{l=0}^\infty{(I-T^{n_k})^l}.$$
Thus, 
$$I-(T^{n_k})^{-1}=I-\sum_{l=0}^\infty{(I-T^{n_k})^l}=\sum_{l=1}^\infty{(I-T^{n_k})^l}=(I-T^{n_k})\sum_{l=0}^\infty{(I-T^{n_k})^l}.$$
From this we have that
$$||I-(T^{n_k})^{-1}||\le{||I-T^{n_k}||}\sum_{l=0}^\infty{||I-T^{n_k}||^l}=$$ $$||I-T^{n_k}||\frac{1}{1-||I-T^{n_k}||}\le{2||I-T^{n_k}||}, $$
which shows that $||I-(T^{n_k})^{-1}||\rightarrow{0}$, as $k\rightarrow{\infty}.$
Therefore, $T^{-1}$ is uniformly rigid.

\end{proof}
\begin{prop}
    If the multiplication operator $M_a$ is recurrent, then $M_a$ is invertible. In this case $M_a^{-1}$ is also recurrent.
\end{prop}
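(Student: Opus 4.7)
The plan is to use Gelfand theory on the commutative Banach algebra $A$ to produce a contradiction from the assumption that $a$ is not invertible. The hypothesis that $M_a$ is recurrent gives, via the Costakis--Manoussos--Parissis characterization recalled just before Proposition 2.1, a dense set $\mathrm{Rec}(M_a)$ in $A$. The conclusion that $M_a^{-1}$ is recurrent will then come for free from the Costakis et al.\ result quoted in the paragraph preceding the proposition.

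First I would assume, for contradiction, that $a$ is not invertible in $A$. Then $0\in\sigma(a)$, and since $A$ is a commutative unital Banach algebra, the standard Gelfand theory from Stout's book supplies a continuous multiplicative linear functional $\phi\colon A\to\mathbb{C}$ with $\phi(a)=0$ and $\phi(1)=1$. This is the key tool: a multiplicative $\phi$ that kills $a$ and therefore kills every positive power of $a$.

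Next I would exploit recurrence. For any $x\in\mathrm{Rec}(M_a)$ there is a strictly increasing sequence $(k_n)$ with $a^{k_n}x = M_a^{k_n}(x)\to x$. Applying the continuous functional $\phi$ and using multiplicativity,
$$\phi(x)=\lim_{n\to\infty}\phi(a^{k_n}x)=\lim_{n\to\infty}\phi(a)^{k_n}\phi(x)=0,$$
since $\phi(a)=0$ and $k_n\ge 1$ for $n$ large. Hence $\phi$ vanishes on the dense set $\mathrm{Rec}(M_a)$, and by continuity $\phi\equiv 0$, contradicting $\phi(1)=1$. This forces $a$ to be invertible, and therefore $M_a=M_a$ is invertible with $(M_a)^{-1}=M_{a^{-1}}$.

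For the second assertion, I would simply invoke the theorem of Costakis, Manoussos and Parissis recalled earlier in this section: the inverse of an invertible recurrent operator is recurrent. Applied to $M_a$, this yields that $M_a^{-1}=M_{a^{-1}}$ is recurrent. The only conceptual obstacle is realizing that one should test recurrence not on the vector $1$ (which need not itself be recurrent) but through a multiplicative functional, so that density of $\mathrm{Rec}(M_a)$ can be leveraged globally; once that idea is in hand, the remainder is a short computation.
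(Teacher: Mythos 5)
Your proof is correct, but it takes a genuinely different route from the paper. The paper stays entirely at the level of the norm: density of $\mathrm{Rec}(M_a)$ gives a recurrent vector $b$ in the ball $B(1,\tfrac14)$, which is invertible because $\|b-1\|<1$; then one iterate satisfies $\|a^{n_{k_0}}b-1\|<\tfrac12$, so $a^{n_{k_0}}b$ is invertible, hence $a^{n_{k_0}}$ and finally $a$ are invertible. You instead argue by contradiction through Gelfand theory: if $a$ is not invertible, some complex homomorphism $\phi$ kills $a$, hence kills every recurrent vector (since $\phi(x)=\lim\phi(a)^{k_n}\phi(x)=0$), hence vanishes on a dense set and so everywhere, contradicting $\phi(1)=1$. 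Both arguments are sound, and both finish the same way by citing Proposition 2.6 of Costakis--Manoussos--Parissis for the recurrence of the inverse. The trade-off: your argument is arguably slicker and foreshadows the Gelfand-transform techniques the paper develops afterwards (it is essentially the $\hat b\equiv$ dense-family version of their Proposition on $|\hat a(\phi)|=1$ off $Z(\hat b)$), but it genuinely needs commutativity so that non-invertibility is witnessed by a multiplicative functional; the paper's Neumann-series argument uses only the open-ness of the group of invertibles near $1$ and would survive in a noncommutative unital Banach algebra with $M_a$ the left-multiplication operator. One cosmetic point: by the paper's definition a recurrent vector comes with a strictly increasing sequence of \emph{positive} integers, so $k_n\ge 1$ for all $n$, not merely for $n$ large; your limit computation is fine as written.
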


\begin{proof}
 Since $M_a$ is recurrrent on $A$, the set of recurrent vectors is dense in $A$. Thus, there exists a vector $b$ in $B(1, \frac{1}{4})$, such that $b$ is recurrent. Since $||b-1||<1$, $b$ is invertible in $A$.
 Given that $b$ is recurrent, there exists and increasing sequence of natural numbers $(n_k)_{k=1}^\infty$ such that $||M_a^{n_k}b-b||$ converges to $0$ as $k$ goes to infinity. Hence, there exists $k_0>1$ such that $||M_a^{{n_k}_0}b-b||<\frac{1}{4}$. Then we have that
   $$||a^{{n_k}_0}b-1||\le{||a^{{n_k}_0}b-b||+||b-1||}<\frac{1}{2}<1,$$
   which shows that $a^{{n_k}_0}b$ is invertible, and so, $a^{n_k}$ is invertible. It follows that $a$, and so $M_a$, is invertible and recurrent by Proposition 2.6 in \cite{Costakis}.
\end{proof}
\begin{prop}
    The following are equivalent.
    \begin{enumerate}
        \item $M_a$ is recurrent on $A$.
        \item $M_a$ is rigid on $A$.
        \item $M_a$ is uniformly rigid on $A$.
    \end{enumerate}
\end{prop}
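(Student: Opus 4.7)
The plan is to establish the chain $(3)\Rightarrow(2)\Rightarrow(1)$ for free from the definitions, and then close the loop by proving $(1)\Rightarrow(3)$. The key structural fact that drives everything is the identity
$$\|M_a^n - I\| = \|M_{a^n - 1}\| = \|a^n - 1\|,$$
which converts norm convergence of operator iterates on $A$ into norm convergence of powers of $a$ to the identity of $A$. Thus, to get uniform rigidity of $M_a$ it suffices to produce a strictly increasing sequence $(n_k)$ of positive integers with $a^{n_k} \to 1$ in $A$.

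To produce such a sequence from mere recurrence, I would first invoke the previous proposition to conclude that $a$ is invertible in $A$. Next, I would use the characterization $\overline{\mathrm{Rec}(M_a)}=A$ (recalled just before this section) to pick a recurrent vector $b$ inside $B(1,1)$; by the Neumann series such a $b$ is invertible in $A$. Since $b$ is recurrent for $M_a$, there is a strictly increasing sequence $(n_k)$ with $a^{n_k}b \to b$. Multiplying by $b^{-1}$ (which is a bounded operation in $A$) yields $a^{n_k} \to 1$ in $A$.

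From this it follows that $\|M_a^{n_k} - I\| = \|a^{n_k} - 1\| \to 0$, and since $(n_k)$ is strictly increasing it tends to $\infty$, so $M_a$ is uniformly rigid. This establishes $(1)\Rightarrow(3)$ and completes the equivalences.

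The main obstacle is really only a bookkeeping point: one must be careful to extract a recurrent vector that is invertible (so that cancelation of $b$ is legitimate) and to verify that the resulting sequence of integers is strictly increasing, not merely infinite, so that the definition of uniform rigidity is met. Both are handled by choosing $b$ within $B(1,1)$ via density of recurrent vectors and by using the definition of a recurrent vector directly.
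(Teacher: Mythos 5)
Your proof is correct and follows essentially the same route as the paper: both reduce to showing $(1)\Rightarrow(3)$ by locating an invertible recurrent vector $b$ (you via $b\in B(1,1)$ and the Neumann series, the paper via openness of the invertible group plus density of recurrent vectors), cancelling $b$ to get $a^{n_k}\to 1$, and concluding from $\|M_a^{n_k}-I\|=\|a^{n_k}-1\|$. The only cosmetic difference is how the invertible recurrent vector is produced; as in the paper, the initial appeal to the previous proposition for invertibility of $a$ itself is not actually needed once $b$ is in hand.
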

\begin{proof}
    It is enough to prove that the first statement implies the third one.
    Let $M_a$ be recurrent on $A.$ By the above result, it follows that $a$ is invertible.
    Since the set of invertible elements is open in $A$ and the set of recurrent vectors is dense in $A$, we have that there is a recurrent vector $b$ which is invertible. 
    Given that $b$ is recurrent, there exists an increasing sequence of natural numbers $(n_k)_{k=1}^\infty$ such that $||M_a^{n_k}b-b||$ converges to $0$, as $k$ goes to infinity. Thus, since 
    $$||1-a^{n_k}||=||(1-a^{n_k})bb^{-1}||\le{||(1-a^{n_k})b||||b^{-1}||},$$
    we have that $||1-a^{n_k}||\rightarrow{0}$, as $k$ goes to infinity.
    But 
    $$||M_a^{n_k}-I||=\sup_{||x||=1}{||M_a^{n_k}x-x||}=\sup_{||x||=1}{||a^{n_k}x-x||}\le{}$$  $$\sup_{||x||=1}{||a^{n_k}-1||||x||}\le{||a^{n_k}-1||},$$
    which shows that $||M_a^{n_k}-I||\rightarrow{0}$, as $k$ goes to infinity, which means that $M_a$ is uniformly rigid.
\end{proof}
\begin{rem}
    Parts i, ii, and iii of Theorem 7.1 in \cite{Costakis} are a particular case of this result.
\end{rem}
Next, we are interested in studying the conditions under which having one recurrent vector is sufficient for the operator $M_a$ to be recurrent.
\begin{prop}
  If $b$ is a recurrent vector for $M_a$, then every vector in the principal ideal generated by $b$ is a recurrent vector for $M_a$.  
\end{prop}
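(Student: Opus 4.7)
The plan is to use the fact that the principal ideal generated by $b$ in the commutative Banach algebra $A$ with identity is precisely $bA = \{cb : c \in A\}$, together with commutativity of the algebra and submultiplicativity of the norm.

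First I would unpack the hypothesis: since $b$ is recurrent for $M_a$, there exists a strictly increasing sequence of positive integers $(n_k)$ such that $M_a^{n_k} b = a^{n_k} b \to b$ in $A$. The goal is to show that for every $c \in A$, the element $cb$ is also a recurrent vector for $M_a$, ideally using the very same sequence $(n_k)$.

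The key computation is that, because $A$ is commutative,
\[
M_a^{n_k}(cb) - cb = a^{n_k}(cb) - cb = c\bigl(a^{n_k} b - b\bigr),
\]
so by submultiplicativity of the norm,
\[
\|M_a^{n_k}(cb) - cb\| \le \|c\|\,\|a^{n_k} b - b\| \longrightarrow 0
\]
as $k \to \infty$. Hence $cb$ is recurrent for $M_a$, witnessed by the same sequence $(n_k)$.

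There is essentially no obstacle: the result falls out immediately from the definition of recurrent vector, the commutativity of $A$, and the fact that $M_a$ commutes with $M_c$. I would just mention that the same argument shows recurrent vectors of $M_a$ form a submodule over $A$ in this commutative setting, which is a pleasant structural observation rather than a difficulty.
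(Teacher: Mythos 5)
Your proposal is correct and follows essentially the same route as the paper: both extract the sequence $(n_k)$ witnessing recurrence of $b$, factor $M_a^{n_k}(cb)-cb = c(a^{n_k}b-b)$ using commutativity, and conclude via submultiplicativity of the norm. No gaps.
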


\begin{proof}
    Since $b$ is a recurrent vector for $M_a$, there exists an increasing sequence of natural numbers $(n_k)_{k=1}^\infty$ such that $||M_a^{n_k}b-b||=||a^{n_k}b-b||$ converges to $0$ as $k$ goes to infinity. Let $c$ be an element of $A$, then we have that
    $$||M_a^{n_k}(bc)-bc||=||a^{n_k}(bc)-bc||=||(a^{n_k}b-b)c||\le{||a^{n_k}b-b||||c||},$$
    from which it follows that $||M_a^{n_k}(bc)-bc||$ converges to $0$ as $k$ goes to infinity.
    Thus, $bc$ is recurrent for $M_a$.
\end{proof}
\begin{cor}
   If $M_a$ has an invertible recurrent vector, then $M_a$ is recurrent. 
\end{cor}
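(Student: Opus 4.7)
The plan is to combine the preceding proposition with the fact that an invertible element generates the whole algebra as a principal ideal. Concretely, suppose $b$ is an invertible recurrent vector for $M_a$. For any $c \in A$, write $c = b \cdot (b^{-1}c)$, so $c$ lies in the principal ideal generated by $b$. By the previous proposition, every element of that ideal is recurrent for $M_a$, hence every $c \in A$ is a recurrent vector. In particular $\operatorname{Rec}(M_a) = A$, which is trivially dense in $A$, so by the characterization recalled from Proposition 2.1 of \cite{Costakis} the operator $M_a$ is recurrent.

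There is essentially no obstacle here: the whole argument is just the observation that an invertible element generates the unit ideal, combined with the ideal-closure property of $\operatorname{Rec}(M_a)$ proved immediately above. No further estimates, spectral theory, or uniform-rigidity machinery is needed for this corollary.
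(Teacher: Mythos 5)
Your proof is correct and follows the same route as the paper: invoke the preceding proposition to see that the principal ideal generated by $b$ consists of recurrent vectors, observe that an invertible $b$ generates all of $A$, and conclude via density of $\operatorname{Rec}(M_a)$. No issues.
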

\begin{proof}
    Let $b$ be an invertible recurrent vector. Then every vector in the principal ideal generated by $b$ is recurrent for $M_a$. Since $b$ is invertible, this principal ideal is $A$, thus, every vector in $A$ is recurrent for $M_a$, which makes $M_a$ recurrent on $A$.
\end{proof}

In the following, we will consider the image $\hat{A}$ of the Banach algebra $A$ under the Gelfand transform. In this context, the Gelfand transform of an element $b$ of $A$ will be denoted by $\hat{b}$. It is known that if $A$ is a commutative semisimple Banach algebra with unity, then the Gelfand transform is a one-to-one, norm decreasing transformation of $A$ onto $\hat{A}$, which is a function algebra on $X$ (the maximal ideal space of $A$) that contains the constants and separates the points of $X$. Every element $\phi\in{X}$ is a complex homomorphism of $A$ and we have that $\hat{a}(\phi)=\phi(a)$ and that $\hat{a}(\phi)=0$, means that $a$ is in the maximal ideal that is the kernel of $\phi$. 
\par
For the disk algebra, $\mathcal{A}$ = the algebra of functions that are analytic on $\mathbb{D}$ and continuous on $\overline{\mathbb{D}}$, the maximal ideal space is the closed unit disk. We also know that for $H^\infty=$ the algebra of bounded analytic functions on $\mathbb{D}$, the open unit disk is dense in the maximal ideal space.
\begin{prop} 
  If there exist elements $b_1, b_2, \ldots, b_p$  that are recurrent for $M_a$ with the same sequence $(n_k)_{k=1}^\infty$ such that $\sum_{i=1}^p|\hat{b}_i(\phi)|\ge{\delta}$ for $\phi$ in a dense subset of $X$ for some positive $\delta$, then $M_a$ is recurrent.
\end{prop}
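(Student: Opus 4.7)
The plan is to combine Proposition 2.5 with a Gelfand-theoretic density argument. First, I would observe that because all the $b_i$ are recurrent along the same sequence $(n_k)$, the proof of Proposition 2.5 generalizes linearly: for any $c_1,\ldots,c_p \in A$,
\[
\|a^{n_k}(b_1c_1+\cdots+b_pc_p)-(b_1c_1+\cdots+b_pc_p)\| \le \sum_{i=1}^p \|a^{n_k}b_i-b_i\|\,\|c_i\| \longrightarrow 0.
\]
Hence every element of the ideal $I = b_1A + b_2A + \cdots + b_pA$ is recurrent for $M_a$. In view of the characterization $\overline{\mathrm{Rec}(M_a)}=A \iff M_a$ is recurrent, it therefore suffices to prove that $I$ is dense in $A$.

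Next I would argue the density of $I$ by contradiction. If $\overline{I}\ne A$, then $\overline{I}$ is a proper closed ideal, and by standard Banach algebra theory it is contained in some maximal ideal $\ker\phi_0$ for a complex homomorphism $\phi_0\in X$. This forces $\hat{b}_i(\phi_0) = \phi_0(b_i) = 0$ for every $i=1,\ldots,p$, and so
\[
\sum_{i=1}^p |\hat{b}_i(\phi_0)| = 0.
\]

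The hypothesis, however, says that the continuous function $\phi \mapsto \sum_{i=1}^p |\hat{b}_i(\phi)|$ is bounded below by $\delta>0$ on a dense subset of $X$. Since each $\hat b_i$ is continuous in the Gelfand topology on $X$, the sum is continuous on $X$, and a continuous function that is $\geq \delta$ on a dense set is $\geq \delta$ everywhere. This contradicts the vanishing at $\phi_0$. So $I$ is dense, $\mathrm{Rec}(M_a)\supseteq I$ is dense, and $M_a$ is recurrent.

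I do not expect any serious obstacle: the substantive ingredient is simply noting that the principal-ideal argument of Proposition 2.5 passes linearly to finite sums when one uses a \emph{common} recurrence sequence $(n_k)$, and that the Gelfand transforms $\hat b_i$ control exactly which maximal ideals $I$ can be contained in. The only mild subtlety is remembering that each maximal ideal in a commutative unital Banach algebra is closed and of the form $\ker\phi_0$, so that the obstruction to density of $I$ really is captured by a common zero of the $\hat b_i$.
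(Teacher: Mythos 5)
Your proof is correct and follows essentially the same route as the paper's: both hinge on the observation that the hypothesis (via continuity of the Gelfand transforms on a dense set) forces the $\hat{b}_i$ to have no common zero, so the ideal $b_1A+\cdots+b_pA$ is contained in no maximal ideal. The only difference is the endgame: the paper extracts the resulting Bezout identity $\sum_{i=1}^p b_ic_i=1$ and shows $\|1-a^{n_k}\|\to 0$, i.e.\ that $1$ itself is recurrent (so Corollary 2.6 applies), whereas you settle for density of the ideal of recurrent vectors --- both finish the job.
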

\begin{proof}
    The condition $\sum_{i=1}^p|\hat{b}_i|\ge{\delta}$ implies that the $\hat{}{b_i}$'s have no common zero, so that there is no maximal ideal that contains all of them. Thus the ideal generated by the the  elements $b_i$ cannot be contained in any (proper) maximal ideal so that it is the entire algebra $A$. Hence, there exist elements $c_1, c_2, \ldots. c_p$ of $A$ such that $\sum_{i=1}^p{b_ic_i}=1$, from which $(1-a^{n_k})\sum_{i=1}^pb_ic_i=1-a^{n_k}$.
    Therefore,
    $$||1-a^{n_k}||=||\sum_{i=1}^p(1-a^{n_k}){b_ic_i}||\le{\sum_{i=1}^p||(1-a^{n_k})b_i||||c_i||}\le{M\sum_{i=1}^p||(1-a^{n_k})b_i||},$$
    where $M=\max_{i=\overline{1, p}}{||c_i||}$.
    Since $||(1-a^{n_k})b_i||$ converges to $0$ for every $i$, $1\le{i}\le{p}$, we have that $||1-a^{n_k}||$ converges to $0$, which shows that $1$ is a recurrent vector and this implies by the previous corollary that $M_a$ is recurrent.
\end{proof}
\par
In the following, we will denote the zero set of an element $\hat{b}\in{\hat{A}}$ by $Z(\hat{b})$, $$Z(\hat{b})=\{\phi\in{X}: \hat{b}(\phi)=0\}.$$
\begin{prop}
    Let $b$ be a recurrent vector for $M_a$ such that $||M_a^{n_k}b-b||=||a^{n_k}b-b||$ converges to $0$ as $k$ goes to infinity, where $(n_k)_{k=1}^\infty$ is an increasing sequence of natural numbers. Then $\hat{a}^{n_k}(\phi)\rightarrow{1}$ for every $\phi$ in the complement of $Z(\hat{b})$ and, so, $|\hat{a}(\phi)|=1$ for $\phi\in{X\setminus{Z(\hat{b})}}$.
\end{prop}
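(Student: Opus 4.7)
The plan is to push the convergence $\|a^{n_k}b - b\| \to 0$ through the Gelfand transform and then do elementary pointwise arithmetic on the maximal ideal space.

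First, I would recall that the Gelfand transform $a \mapsto \hat{a}$ is a norm-decreasing homomorphism from $A$ into $C(X)$. Applying it to $a^{n_k}b - b$ and using the sup-norm on $\hat{A}$ yields
\[
\sup_{\phi \in X} \bigl|\hat{a}(\phi)^{n_k}\hat{b}(\phi) - \hat{b}(\phi)\bigr| \;\le\; \|a^{n_k}b - b\| \;\longrightarrow\; 0.
\]
In particular, for each fixed $\phi \in X$ we have $\hat{b}(\phi)\bigl(\hat{a}(\phi)^{n_k} - 1\bigr) \to 0$.

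Next, for $\phi \in X \setminus Z(\hat{b})$ we have $\hat{b}(\phi) \neq 0$, so dividing by $\hat{b}(\phi)$ (a fixed nonzero scalar) gives $\hat{a}(\phi)^{n_k} \to 1$. Taking absolute values, $|\hat{a}(\phi)|^{n_k} \to 1$. This forces $|\hat{a}(\phi)| = 1$: if $|\hat{a}(\phi)| < 1$ then $|\hat{a}(\phi)|^{n_k} \to 0$, while if $|\hat{a}(\phi)| > 1$ then $|\hat{a}(\phi)|^{n_k} \to \infty$, both contradicting convergence to $1$.

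There is no real obstacle here; the statement is essentially a direct computation once one recognizes that the Gelfand transform converts the Banach-algebra norm estimate into a uniform pointwise estimate on $X$. The only thing to be careful about is that the division by $\hat{b}(\phi)$ is being done for each fixed $\phi$ separately (so convergence of $\hat{a}^{n_k}$ to $1$ is claimed pointwise on $X \setminus Z(\hat{b})$, not uniformly), which matches the statement of the proposition.
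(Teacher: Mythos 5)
Your proof is correct and is essentially the same as the paper's: the paper applies each complex homomorphism $\phi$ directly to $a^{n_k}b-b$ and uses its continuity, which is exactly your step of passing through the Gelfand transform and evaluating at $\phi$, followed by the same division by $\hat{b}(\phi)\neq 0$ and the same modulus argument. You spell out slightly more explicitly why $|\hat{a}(\phi)|^{n_k}\to 1$ forces $|\hat{a}(\phi)|=1$, but the route is identical.
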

\begin{proof}
    Let $\phi\in{X\setminus{Z(\hat{b})}}$. Since $a^{n_k}b-b\rightarrow{0}$ in $A$ and $\phi$ is continuous on $A$, we have that $\phi(a^{n_k}b-b)=\phi(a^{n_k}-1)\phi(b)\rightarrow{0}$. Given that $\phi(b)\ne{0}$, we have that $\phi(a^{n_k}-1)=\phi(a^{n_k})-\phi(1)\rightarrow{0}$, from which we have that $\phi(a^{n_k})=(\phi(a))^{n_k}\rightarrow{1}.$ It follows that $|\phi(a)|=1$.
    Therefore, the complement of the set $U=\{\phi\in{X}:|\phi(a)|=1\}$ is contained in the zero set of $\hat{b}$, $Z(\hat{b})$.
\end{proof}
\begin{cor}
    Under the hypotheses of the proposition above, if the interior of the zero set of $\hat{b}$ is empty, then $|\hat{a}|=1$ on $X$.
\end{cor}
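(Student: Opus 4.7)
The plan is to combine the previous proposition with a simple density/continuity argument using the topology of the maximal ideal space.

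Recall that the maximal ideal space $X$, equipped with the Gelfand topology, is a compact Hausdorff space and every Gelfand transform $\hat{c}\in\hat{A}$ is continuous on $X$. In particular $\hat{b}:X\to\mathbb{C}$ is continuous, so $Z(\hat{b})=\hat{b}^{-1}(\{0\})$ is a closed subset of $X$, and therefore $X\setminus Z(\hat{b})$ is open in $X$. The hypothesis that the interior of $Z(\hat{b})$ is empty means precisely that $X\setminus Z(\hat{b})$ is dense in $X$.

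By the preceding proposition, $|\hat{a}(\phi)|=1$ for every $\phi\in X\setminus Z(\hat{b})$. Since $\hat{a}$ is continuous on $X$, the map $\phi\mapsto|\hat{a}(\phi)|$ is a continuous real-valued function on $X$ which equals $1$ on the dense set $X\setminus Z(\hat{b})$. A continuous function on a topological space that agrees with the constant function $1$ on a dense subset must agree with it everywhere, so $|\hat{a}(\phi)|=1$ for all $\phi\in X$, as claimed.

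There is no real obstacle here beyond identifying the right topological fact: the proposition already does all the analytic work, and the corollary simply upgrades pointwise information on a dense open set to information on the whole maximal ideal space via continuity of the Gelfand transform. If one wanted a slightly more self-contained presentation, one could alternatively write, for an arbitrary $\phi_0\in Z(\hat{b})$, a net (or sequence, when $X$ is metrizable, as for $\mathcal{A}$) $(\phi_\alpha)\subset X\setminus Z(\hat{b})$ converging to $\phi_0$, and pass to the limit in $|\hat{a}(\phi_\alpha)|=1$ using continuity of $\hat{a}$ to obtain $|\hat{a}(\phi_0)|=1$; but the density/continuity formulation above is cleaner.
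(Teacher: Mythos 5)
Your proof is correct and is essentially the paper's own argument viewed from the complementary side: the paper notes that $\{\phi:|\hat{a}(\phi)|\neq 1\}$ is open (by continuity of $\hat{a}$) and contained in $Z(\hat{b})$, hence empty, while you equivalently observe that $X\setminus Z(\hat{b})$ is dense and invoke continuity of $|\hat{a}|$. Same topological fact, same use of the preceding proposition; no gap.
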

\begin{proof} 
    The complement of the set $U$ on which $\hat{a}$ takes unimodular values is open and contained in $Z(\hat{b})$, and since $Z(\hat{b})$ doesn't contain any open set, we have that it has to be the empty set. Therefore, $|\hat{a}|=1$ on $X$. In particular, this implies that $a$ is invertible in $A$.
\end{proof}
When we apply this result to the disk algebra and to the algebra $H^\infty$, we obtain the following.
\begin{cor} Let $A$ be the disk algebra $\mathcal{A}$ or the algebra $H^\infty$.
If a multiplication operator $M_a$ on $A$ has a non-zero recurrent vector $b$, then $M_a$ is recurrent.
\end{cor}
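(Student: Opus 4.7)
The plan is to combine the preceding corollary (about empty interior of $Z(\hat{b})$ forcing $|\hat{a}|\equiv 1$) with the analytic structure of the functions in $\mathcal{A}$ and $H^\infty$, and then observe that a multiplication operator by a unimodular constant is trivially recurrent.

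First I would verify that the hypothesis of the previous corollary holds: namely, that the zero set $Z(\hat{b})$ in the maximal ideal space $X$ has empty interior. In both algebras, $\hat{b}$ restricted to the disk $\mathbb{D}\subseteq X$ coincides with the analytic function $b$, and since $b\not\equiv 0$, its zeros in $\mathbb{D}$ are isolated. For $\mathcal{A}$, $X=\overline{\mathbb{D}}$ and every non-empty relatively open subset of $\overline{\mathbb{D}}$ must meet $\mathbb{D}$ (a non-empty open set of $\mathbb{C}$ that meets $\mathbb{T}$ also meets $\mathbb{D}$), so any such open set contains points where $b\neq 0$. For $H^\infty$, the disk $\mathbb{D}$ is dense in $X$, so every non-empty open set in $X$ intersects $\mathbb{D}$, and again contains points where $\hat{b}\neq 0$ by the isolated zeros property. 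In both cases $Z(\hat{b})$ has empty interior.

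Next I would invoke Corollary 2.10 to conclude that $|\hat{a}(\phi)|=1$ for every $\phi\in X$. Restricting this to the disk gives $|a(z)|=1$ for all $z\in\mathbb{D}$. Since $a$ is analytic on $\mathbb{D}$ and its image lies in the unit circle $\mathbb{T}$, the open mapping theorem (or the maximum modulus principle applied to $a$ and $1/a$) forces $a$ to be constant; call it $c$, with $|c|=1$.

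Finally, with $a=c\in\mathbb{T}$, we have $M_a=cI$. By compactness of $\mathbb{T}$ there exists an increasing sequence $(n_k)$ in $\mathbb{N}$ with $c^{n_k}\to 1$, and then
\[
\|M_a^{n_k}-I\|=|c^{n_k}-1|\to 0,
\]
so $M_a$ is uniformly rigid, and in particular recurrent. The only real step requiring care is confirming that the zero set has empty interior in the rather delicate maximal ideal space of $H^\infty$; everything else reduces to standard facts about analytic functions and the elementary trick for a unimodular scalar operator.
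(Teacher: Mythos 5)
Your proof is correct and follows essentially the same route as the paper's: both arguments reduce to the fact that the zeros of the nonzero analytic function $b$ are isolated, so $|\hat{a}|=1$ on a nonempty open subset of $\mathbb{D}$, whence $a$ is a unimodular constant and $M_a=cI$ is recurrent. The only cosmetic difference is that you pass through the corollary on empty interior of $Z(\hat{b})$ (verifying that hypothesis in the full maximal ideal space), whereas the paper applies the preceding proposition directly and works with an open disk $B\subset\mathbb{D}\setminus Z(\hat{b})$.
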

\begin{proof}
According to the previous proposition, $|\hat{a}(\phi)|=1$ for all $\phi\in{X\setminus{Z(\hat{b})}}$. Recall that $\mathbb{D}$ is dense in the maximal ideal space of $A$.
The complement of the zero set of $\hat{b}$ contains an open disk $B\subset{\mathbb{D}}$. This implies that $|\hat{a}|=1$ on $B$ and, since $\hat{a}$ is analytic on $B$, we have that $\hat{a}=c$ on $B$. Thus, the function $\hat{a}-c$, which is analytic on the open unit disk $\mathbb{D}$, vanishes on the nonempty open subset $B$ of $\mathbb{D}$, therefore it is constantly $0$ on $\mathbb{D}$. Hence, $\hat{a}=c$, where $|c|=1$. But multiplication operators induced by a unimodular constant are recurrent, so the operator $M_a$ is recurrent.
 \end{proof}

\section{The Space $L(\omega)$} 

It is well known that the space $\omega=K^{\mathbb{N}}$, where $K=\mathbb{R}$ or $\mathbb{C}$,  is a Fr\'{e}chet space. A standard way to see that is the following. Let $x=(x_1, x_2, \cdots) \in \omega$ and  $p_n(x)=\sqrt{\sum_{i=1}^n|x_i|^2}.$ Then the topology in $\omega$ is given by the metric 
$d(x,y)=\sum_{n=1}^{\infty}2^{-n}\frac{p_n(x-y)}{1+p_n(x-y)}.$ For us, $K$ will be $\mathbb{C}$.

The space of operators (linear and continuous) from $\omega$ into itself will be denoted, as usual, by $L(\omega)$. An important consequence of $\omega$ being metrizable is that the open mapping theorem holds. We will identify the  operators with their representing matrices with respect to the canonical basis $\{e_k : k = 1, 2, \ldots\}$ where the $k$th coordinate of $e_k$ is $1$ and all the others are zero. 
An element $x$ of $\omega$ can be written as $\sum_{k=1}^\infty{x_ke_k}$, with $x_k\in{K}$, or as $(x_k)_{k=1}^\infty$. The set $\{k:x_k\ne{0}\}$ is called the support of $x$ and it is denoted by $supp(x)$. We will denote by $D$ the dense set of the finite span of $\{e_k:k = 1, 2, \ldots\}$, in other words, $D$  is the set of all vectors whose support is a finite set. 
In what follows, $A = (a_{i, j })$ with $i, j = 1, 2, 3, \ldots$ will be the representing matrix of an element of $L(\omega)$. Here are some of the results in \cite{Hector_omega} that describe the hypercyclic behavior of some classes of elements in $L(\omega)$. 
\begin{prop} [Proposition 1 \cite{Hector_omega}]
Let $A$ be an infinite matrix. Then $A\in{L(\omega)}$ if and only if every row has only
a finite number of entries different from zero.
\end{prop}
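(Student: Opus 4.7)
The plan is to prove the two implications separately, with the product-topology characterization of continuity on $\omega$ doing most of the work.

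For the sufficiency direction, assume that every row of $A$ has only finitely many nonzero entries. Then for each fixed $i$, the formula $(Ax)_i = \sum_{j=1}^\infty a_{i,j} x_j$ reduces to a finite linear combination of the coordinates of $x$, so the map $x \mapsto (Ax)_i$ is well-defined on $\omega$ and is continuous (it factors through a finite-dimensional projection). Since the topology on $\omega$ is the product topology, a linear map $T:\omega\to\omega$ is continuous if and only if each coordinate functional $x\mapsto (Tx)_i$ is continuous. Applying this to $T=A$ gives $A\in L(\omega)$.

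For the necessity direction, I would argue by contrapositive: suppose there is a row $i_0$ with infinitely many nonzero entries, say at columns $j_1 < j_2 < \cdots$, and exhibit an $x\in\omega$ for which $Ax$ is not defined as an element of $\omega$. The construction is straightforward: set
\[
x_{j_k} = \frac{1}{a_{i_0,j_k}} \quad \text{for every } k\ge 1,
\]
and $x_j = 0$ for all other $j$. This $x$ lies in $\omega$ because $\omega$ places no size restriction on its sequences. However, the $i_0$-th coordinate of $Ax$ would be the formal series $\sum_{k=1}^\infty a_{i_0,j_k} x_{j_k} = \sum_{k=1}^\infty 1$, which diverges. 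Hence $A$ cannot even be interpreted as a map from $\omega$ to $\omega$, let alone as a continuous one, so $A\notin L(\omega)$.

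The argument is essentially routine; the only conceptual point to emphasize is that the very definition of $A\in L(\omega)$ requires, before any continuity question, that $Ax$ be a well-defined element of $\omega$ for every $x\in\omega$, i.e., that each formal series $\sum_j a_{i,j}x_j$ converges. Once one sees that a single row with infinitely many nonzero entries can be sabotaged by a reciprocal-type choice of $x$, both directions are short. There is no real obstacle; the main thing to get right is the clean reference to the product-topology characterization of continuity in the sufficiency half and the explicit divergent construction in the necessity half.
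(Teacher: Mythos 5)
Your proof is correct and is the standard argument; the paper itself states this result only as a citation to Salas's earlier paper on $\omega$, and the proof given there proceeds in essentially the same way (continuity into a product space coordinate-by-coordinate for sufficiency, and a reciprocal choice of $x$ forcing a divergent row sum for necessity). Nothing to add.
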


\begin{cor}If $A\in{L(\omega)}$ is one-to-one and onto, then $A^{-1} \in L(\omega )$ and so every row of $A^{-1}$ has only
a finite number of entries different from zero.
\end{cor}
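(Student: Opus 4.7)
The plan is to observe that this corollary is essentially immediate from the Open Mapping Theorem (which the authors have just highlighted as applying to $\omega$ because $\omega$ is a Fréchet space) combined with the preceding Proposition 3.1.

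First I would note that since $A \in L(\omega)$ is linear, continuous, one-to-one, and onto, and $\omega$ is a Fréchet space, the Open Mapping Theorem implies that $A$ is an open map. Equivalently, the set-theoretic inverse $A^{-1}\colon \omega \to \omega$ is continuous, so $A^{-1} \in L(\omega)$.

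Next I would apply Proposition 3.1 (Proposition 1 of \cite{Hector_omega}) to the operator $A^{-1}$: since $A^{-1}$ belongs to $L(\omega)$, its representing matrix has the property that every row has only finitely many nonzero entries. This gives both conclusions of the corollary simultaneously.

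There is no real obstacle here; the only point to be careful about is the identification of the set-theoretic inverse of $A$ with the operator whose matrix representation is being discussed. Since the canonical basis vectors $e_k$ span a dense subspace $D$, and both $A$ and its set-theoretic inverse act coordinate-wise on $\omega$ with respect to the product topology, the matrix of $A^{-1}$ is well defined via $A^{-1}(e_k)$, and continuity of $A^{-1}$ (once established via open mapping) ensures this matrix genuinely represents $A^{-1}$ as an element of $L(\omega)$. Hence Proposition 3.1 applies and the row-finiteness of $A^{-1}$ follows.
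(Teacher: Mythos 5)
Your proof is correct and follows exactly the paper's own argument: apply the Open Mapping Theorem to get continuity of $A^{-1}$, then invoke Proposition 3.1 to conclude that every row of its representing matrix has only finitely many nonzero entries. The extra remark about identifying the set-theoretic inverse with its matrix representation is a harmless elaboration of what the paper leaves implicit.
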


\begin{proof} Apply the Open Mapping Theorem for the continuity of $A^{-1}$and then the proposition above to reach the conclusion about the entries. \end{proof}
As usual, a matrix $A$ is said to be a diagonal matrix if all the entries not on the main diagonal are $0$. 
\begin{defn}
The matrix $A\in{L(\omega)}$ is said to be upper staircase if $(n_i)_{i=1}^{\infty}$
is strictly increasing and $i<n_i$ for all $i$, where $n_i=\max{\{n: a_{i,n}\ne{0}\}}$. The set of upper staircase operators will be denoted by $S$.  
\end{defn}
\begin{defn}
   Let $\mathcal{A}$ be a class. Then $\{BAB^{-1}:A\in{\mathcal{A}, \hspace{.1in}B\in{L(\omega)}}\}$ will be denoted by $\mathcal{A}^s$. 
\end{defn} 
\begin{thm}[Theorem 2 \cite{Hector_omega}]If $A\in{S^s}$ , then $A$ is hypercyclic. Furthermore, if $K=\mathbb{C}$, then $A$ is chaotic.
\end{thm}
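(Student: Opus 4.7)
The plan is to use similarity invariance to reduce to $A \in S$ and then to apply the Godefroy--Shapiro eigenvalue criterion. Hypercyclicity and chaoticity are preserved by topological conjugacy, and Corollary 3.3 (via the open mapping theorem) ensures that every bijection $B \in L(\omega)$ has $B^{-1} \in L(\omega)$, so $B$ is a homeomorphism of $\omega$. Consequently, it is enough to verify the theorem for $A \in S$.

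For each $\lambda \in \mathbb{C}$, I would construct eigenvectors of $A$ row by row. The $i$th row of $A v = \lambda v$ reads
$$a_{i, n_i} v_{n_i} + \sum_{j < n_i} a_{i,j} v_j = \lambda v_i.$$
Since $a_{i, n_i} \ne 0$ and $n_1 < n_2 < \cdots$ with $n_i > i$, this equation solves for $v_{n_i}$ in terms of the earlier coordinates. Processing $i = 1, 2, 3, \ldots$ in order determines every coordinate with index in $\{n_1, n_2, \ldots\}$ once the remaining coordinates $v_j$ with $j \in F := \mathbb{N} \setminus \{n_1, n_2, \ldots\}$ have been prescribed. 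Because $n_i > i$ forces $1 \in F$ and makes $F$ infinite, we obtain, for each $\lambda$, an infinite-dimensional family of eigenvectors $v(\lambda)$ whose coordinates are polynomials in $\lambda$.

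To apply the complex Godefroy--Shapiro criterion, I would show that the span of $\{v(\lambda) : |\lambda| < 1\}$ is dense in $\omega$, and likewise for $\{|\lambda| > 1\}$, yielding hypercyclicity; and that the span of the eigenvectors whose eigenvalues are roots of unity is also dense, yielding chaoticity in the complex setting. A continuous linear functional on $\omega$ is evaluation against a finitely supported sequence $\psi = (\psi_1, \ldots, \psi_N, 0, \ldots)$, so density amounts to showing that if $\sum_{j=1}^N \psi_j v_j(\lambda) = 0$ for every $\lambda$ in the prescribed set, then $\psi = 0$. For each choice of free parameters, $\sum_j \psi_j v_j(\lambda)$ is a polynomial in $\lambda$; vanishing on any nonempty open subset of $\mathbb{C}$, or on the infinite set of roots of unity, forces it to vanish identically. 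Varying which free index $j_0 \in F$ carries the value $1$ produces a cascade of polynomial identities from which the $\psi_j$'s can be extracted one by one in a triangular fashion.

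The main obstacle is this density step: one must verify that the cascade of polynomial identities obtained by varying the free parameters is rich enough to deduce $\psi = 0$. The upper staircase condition is exactly what drives this triangular extraction, since each new row introduces a fresh coordinate $v_{n_i}$ that has not yet appeared in the functional relation and whose polynomial dependence on $\lambda$ is controlled. Once density is established on both annuli and on the roots of unity, hypercyclicity and, in the complex setting, chaoticity follow directly from Godefroy--Shapiro.
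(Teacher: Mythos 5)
This theorem is quoted from \cite{Hector_omega}; the present paper contains no proof of it, so your proposal can only be measured against the cited source, whose title already signals that the intended proof is exactly the eigenvalue route you take. Your overall strategy --- reduce to $A\in S$ by similarity (using the corollary that inverses stay in $L(\omega)$), solve $Av=\lambda v$ row by row using $a_{i,n_i}\neq 0$ to express $v_{n_i}$ through earlier coordinates, and feed the resulting eigenvector fields for $|\lambda|<1$, $|\lambda|>1$ and for roots of unity into the Godefroy--Shapiro criterion --- is sound and is in the spirit of the original argument. One small factual slip: $F=\mathbb{N}\setminus\{n_1,n_2,\dots\}$ need not be infinite (take $n_i=i+1$, where $F=\{1\}$), so the ``infinite-dimensional family of eigenvectors'' claim is false in general; fortunately all you need is $1\in F$ and density of the span.

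The substantive gap is the density step, which you rightly call the main obstacle but do not carry out, and the natural reading of your ``triangular extraction'' fails. The indices split into disjoint chains $f\to n_f\to n_{n_f}\to\cdots$ rooted at the free indices, and these chains interleave; e.g.\ with $n_1=2$, $n_2=3$, $n_3=5$, $n_4=6$ one gets $\deg_\lambda v_5=\deg_\lambda v_6=3$ when $a_{4,5}\neq 0$, so the top-degree coefficients of $\psi_5 v_5$ and $\psi_6 v_6$ can cancel and the coefficients $\psi_j$ cannot be peeled off by comparing leading terms in $\lambda$. What does work is the following sharpening, which is the missing idea: for each $m$ let $r(m)\in F$ be the root of its chain and $g(m)$ its depth; an induction on the recursion shows that a monomial $\lambda^{d}c_{j_0}$ occurs in $v_m$ only if the $d$th descendant of $j_0$ is $\leq m$, that the extremal monomial $\lambda^{g(m)}c_{r(m)}$ occurs in $v_m$ with nonzero coefficient, and that it occurs in no $v_j$ with $j<m$. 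Reading off the coefficient of $\lambda^{g(M)}c_{r(M)}$, where $M$ is the largest index in the support of the annihilating functional $\psi$, forces $\psi_M=0$, and downward induction gives $\psi=0$. With that lemma in place, the rest of your argument (density of the three spans, hence hypercyclicity and, over $\mathbb{C}$, chaos via Godefroy--Shapiro) goes through.
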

\begin{prop}[Proposition 6 \cite{Hector_omega}] Let $A=(a_{i,j})\in{L(\omega)}$ be lower triangular. Then $A$ is not hypercyclic. If the main diagonal of $A$ is zero and $f(\lambda)=\sum_{i=1}^{\infty}{f_n\lambda^n}$ is a formal analytic function, then $f(A)\in{L(\omega)}$.
\end{prop}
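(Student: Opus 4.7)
The statement breaks into two independent claims, and my plan is to exploit the $A$-invariance of the one-dimensional subspace $\mathbb{C} e_1$ for the first part, and the progressive ``descent'' of the support of the rows of $A^n$ for the second.

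For the claim that a lower triangular $A$ is never hypercyclic, I would consider the first-coordinate functional $\phi_1:\omega\to\mathbb{C}$ defined by $\phi_1(x)=x_1$. Lower triangularity means $a_{1,j}=0$ for $j\ge 2$, so for every $x\in\omega$,
$$\phi_1(Ax)=a_{1,1}x_1=a_{1,1}\phi_1(x),$$
and iterating gives $\phi_1(A^k x)=a_{1,1}^k x_1$ for all $k\ge 0$. Suppose, for contradiction, that some $x\in\omega$ has dense orbit under $A$. Since $\omega=\mathbb{C}^{\mathbb{N}}$ carries the product topology, $\phi_1$ is not only continuous and surjective but also open, hence it sends dense sets to dense sets. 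Thus the geometric sequence $\{a_{1,1}^k x_1:k\ge 0\}$ would be dense in $\mathbb{C}$; but such a sequence is never dense in $\mathbb{C}$: it converges to $0$ when $|a_{1,1}|<1$, tends to infinity when $|a_{1,1}|>1$, stays on a single circle when $|a_{1,1}|=1$, and is eventually zero when $a_{1,1}=0$. This contradiction rules out hypercyclicity.

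For the claim that $f(A)\in L(\omega)$ when the main diagonal of $A$ vanishes, I would first establish by induction on $n\ge 1$ that strict lower triangularity forces $(A^n)_{i,j}=0$ whenever $j\ge i-n+1$. The base case $n=1$ is the hypothesis $a_{i,j}=0$ for $j\ge i$; for the inductive step one writes $(A^{n+1})_{i,j}=\sum_k a_{i,k}(A^n)_{k,j}$, in which a nonzero summand requires both $k<i$ and $j\le k-n$, so $j\le i-n-1$. In particular, for every $n\ge i$ the entire $i$-th row of $A^n$ vanishes, and the putative series
$$\bigl(f(A)\bigr)_{i,j}=\sum_{n=1}^{\infty}f_n(A^n)_{i,j}$$
collapses to the honest finite sum $\sum_{n=1}^{i-1}f_n(A^n)_{i,j}$, defining an unambiguous complex number with no convergence issue. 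Moreover these summands vanish unless $j\le i-n$, so the $i$-th row of $f(A)$ is supported in $\{1,2,\dots,i-1\}$, a finite set. Invoking the preceding row-finiteness characterization of $L(\omega)$ yields $f(A)\in L(\omega)$.

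The hardest step is the combinatorial bookkeeping in the induction on the support of the rows of $A^n$; once it is in place, the entries of $f(A)$ are honest finite sums and row-finiteness follows immediately. The only subtle point in the hypercyclicity argument is that density of an orbit in $\omega$ descends to density of its first-coordinate image in $\mathbb{C}$, which rests on the fact that coordinate projections in the product topology are open.
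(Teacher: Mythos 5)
Your proof is correct, but be aware that the paper offers no argument of its own to compare against: the statement is quoted verbatim as Proposition 6 of the cited reference \cite{Hector_omega}, so it appears here without proof. Both halves of your argument are sound and complete --- the first coordinate of any orbit of a lower triangular $A$ is the geometric sequence $a_{1,1}^k x_1$, which is never dense in $\mathbb{C}$, and density does push forward through the continuous surjective projection $\phi_1$; and your induction showing $(A^n)_{i,j}=0$ for $j\ge i-n+1$ correctly reduces each entry of $f(A)$ to a finite sum and leaves every row of $f(A)$ finitely supported, which is exactly the row-finiteness criterion for membership in $L(\omega)$ stated earlier in the section.
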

\par Conejero and B\`{e}s \cite{Bes-Conejero} studied denumerable sets of non-zero polynomials in a fixed backward weighted shift and proved that they have a common hypercyclic vector (actually an infinite-dimensional closed subspace of hypercyclic vectors, except the zero vector). The representing matrices of these operators are upper triangular.

From Theorem 4.1 in \cite{Costakis} we know that a lower triangular operator on $\mathbb{C}^n$ is recurrent if and only if it is similar to a diagonal matrix with entries in $\mathbb{T}$. In preparation for the main result of this section, which presents a  characterization of the recurrent lower triangular operators on $\omega$, we will prove a finite dimensional result. 

\begin{exmp} First, we show directly that a lower triangular and non-diagonal matrix $A$ with two different unimodular diagonal entries is recurrent (compare this with the corresponding part of the proof of Theorem 4.1 in \cite{Costakis}).    \[A = \begin{pmatrix} 1 & 0 \\ a & \lambda \end{pmatrix}  \Longrightarrow A^n = \begin{pmatrix} 1 & 0 \\ a\frac{\lambda^n-1}{\lambda-1} & \lambda^n \end{pmatrix}\] 
If $lim_{j\to \infty} \lambda^{n_j}=1,$ then $A^{n_j}$ goes to the identity matrix. 
\end{exmp}

\begin{rem} A recurrent operator  $A=(a_{i,j})_{n\times n}$ on the finite dimensional normed space $\mathbb{C}^n$ is uniformly rigid. 
\end{rem}
Since all norms on $\mathbb{C}^n$ are equivalent, it is convenient to 
see $\mathbb{C}^n$ as a Hilbert space 
with $\{e_j:j\leq n\}$ the canonical orthonormal basis. We identify the operator $A$ with its representing matrix with respect to the canonical basis and we have that $A(e_j)=\sum_{i=1}^n a_{i,j}e_i.$ 
\begin{lem} \label{finite}
 Let $A=(a_{i,j})_{n\times n}$ be a recurrent lower triangular matrix on $C^n.$ Suppose $i_1<\cdots < i_k\leq n$ and 
$\lambda=a_{i_1,i_1}=\cdots=a_{i_k,i_k}$ and $ a_{j,j}\neq \lambda$ if $ j\notin\{i_1,\cdots,i_k\}.$ Then there are $k$ eigenvectors for $\lambda$ of the form \[v_{i_p}=e_{i_p}+\sum_{j=i_p+1}^n x_j(i_p)e_j\] where $x_j(i_p)=0$ whenever $j=i_s$ and $p<s\leq{k}.$
\end{lem}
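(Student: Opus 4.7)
The plan is to first establish that $A$ is diagonalizable, so that the $\lambda$-eigenspace $E_\lambda$ has dimension equal to the algebraic multiplicity of $\lambda$, which is exactly $k$ (the number of times $\lambda$ appears on the diagonal of the lower triangular matrix $A$). Since $A$ is recurrent on $\mathbb{C}^n$, by the remark preceding the lemma it is uniformly rigid, hence $\|A^{n_j} - I\| \to 0$ along some increasing sequence $(n_j)$. Passing to a Jordan form $A = SJS^{-1}$, we would have $\|J^{n_j} - I\| \to 0$, and this rules out any Jordan block of size $\geq 2$, since the entries immediately above the diagonal of $J^{n_j}$ grow like $n_j \mu^{n_j - 1}$ (which cannot tend to $0$ for $|\mu|=1$). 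Hence $A$ is diagonalizable and $\dim E_\lambda = k$.

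Next, I will introduce the coordinate projection $\pi \colon E_\lambda \to \mathbb{C}^k$ defined by $\pi(v) = (v_{i_1}, \ldots, v_{i_k})$, where $v = \sum_j v_j e_j$. The crucial observation is that $\pi$ is injective. Indeed, suppose $\pi(v) = 0$ and $v \neq 0$, and let $m$ be the smallest index with $v_m \neq 0$. Then $m \notin \{i_1, \ldots, i_k\}$, so $a_{m,m} \neq \lambda$. The eigenvector equation $(Av)_m = \lambda v_m$ combined with lower triangularity reads
\[
(a_{m,m} - \lambda)\, v_m = -\sum_{l<m} a_{m,l}\, v_l = 0,
\]
forcing $v_m = 0$, a contradiction. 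Since $\dim E_\lambda = k = \dim \mathbb{C}^k$, the map $\pi$ is a bijection.

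I will then define, for each $p \in \{1,\ldots,k\}$, the vector $v_{i_p}$ as the unique element of $E_\lambda$ with $\pi(v_{i_p})$ equal to the $p$-th standard basis vector of $\mathbb{C}^k$. This immediately gives $v_{i_p, i_p} = 1$ and $v_{i_p, i_s} = 0$ for $s \neq p$, in particular for $s > p$. To verify the stated form, it remains to show that the coefficients of $v_{i_p}$ at positions $j < i_p$ vanish. This follows by strong induction on $j$: if $v_{i_p, l} = 0$ for all $l < j$ and $j \notin \{i_1,\ldots,i_k\}$, then $a_{j,j} \neq \lambda$ and the row-$j$ eigenvector equation forces $v_{i_p, j} = 0$; if $j = i_s$ with $s < p$, then $v_{i_p, j} = 0$ by construction of $\pi(v_{i_p})$.

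The main obstacle is the first step, deducing diagonalizability of $A$ from recurrence; once $\dim E_\lambda = k$ is secured, the remainder is a straightforward bookkeeping argument using only the lower triangular structure. Linear independence of the resulting $v_{i_1}, \ldots, v_{i_k}$ is automatic since their first nonzero coefficients occur at the distinct positions $i_1 < \cdots < i_k$.
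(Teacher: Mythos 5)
Your proof is correct, but it takes a genuinely different route from the paper's. The paper builds each eigenvector coordinate by coordinate: at a generic row $v$ it solves $(a_{v,v}-\lambda)x_v=-\sum_{u<v}a_{v,u}x_u$, and at each resonant row $v=i_s$ (where the diagonal entry equals $\lambda$ and that equation cannot be solved) it shows the obstruction $w=\sum_{u<v}a_{v,u}x_u$ vanishes by computing $A^m f=\lambda^m f+m\lambda^{m-1}we_{i_s}+g_m$ for the partial vector $f$ built so far, whence $m|w|\leq\|A^mf-f\|\to 0$ along a rigidity sequence. You instead spend the dynamical hypothesis once and globally: uniform rigidity of $A$ (the remark preceding the lemma, which the paper's argument also needs, since its $f$ is not a priori a recurrent vector) forces all eigenvalues to be unimodular and kills every Jordan block of size at least two, because the superdiagonal entry $n_j\mu^{n_j-1}$ of $J^{n_j}$ has modulus $n_j\to\infty$. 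This is exactly the same linear-growth phenomenon the paper exploits, packaged at the level of the Jordan form rather than of a hand-built vector. Diagonalizability then gives $\dim E_\lambda=k$, and your coordinate projection $\pi(v)=(v_{i_1},\dots,v_{i_k})$, shown injective by the first-nonzero-coordinate argument, is a bijection, so the eigenvectors with the prescribed vanishing pattern exist by pure linear algebra; your inductive verification that the coordinates below $i_p$ vanish is sound. What your version buys is a clean separation of the dynamical input (rigidity implies no nilpotent part) from the triangular bookkeeping, and it produces all $k$ eigenvectors at once. What the paper's version buys is an explicit recursive construction of the coefficients, which is the form reused in Theorem 3.12 to build the conjugating matrix $B$ column by column --- though your eigenvectors have the same support pattern and would serve there equally well.
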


\begin{proof}
We first construct $v_{i_1}$, the other vectors can be constructed in a similar fashion. The construction of $v_{i_1}$ will take $k$ steps. To avoid so many indexes we will call its coeficients just $x_j.$

Step 1. Set $x_j=0$ for $j<i_1$ and $x_{i_1}=1$  For $i_{1}<v<i_{_2}$ we can solve  the equation \[\sum_{u=1}^{v-1} a_{v,u}x_u+a_{v,v}x_v=\lambda x_v\] for $x_v$, since $a_{v, v}\ne{\lambda}.$
\\
For $v=i_2,$ let $\sum_{u=1}^{v-1} a_{v,u}x_u=w; $ We claim that $w=0.$
Let $f=e_{i_1}+\sum_{j=i_1+1}^{i_2-1}x_je_j.$ Then
\[A(f)=\lambda f+we_{i_2} +g_1\]
where $g_1\in E=span \{e_j:i_2<j\leq n\}$ and $E$ is invariant under $A$. We also have that
\[ A(e_{i_2})=\lambda e _{i_2}+h\] with  $h\in E=span \{e_j:i_2<j\leq n\}.$ We can use induction on $m$  there is $g_m\in E=span \{e_j:i_2<j\leq n\}$ such that
\[A^mf=\lambda^mf+m\lambda^{m-1}we_{i_2}+g_m.\]
Here is the inductive step.
Assume that
    \[A^{m-1}(f)=\lambda^{m-1}f +(m-1)\lambda^{m-2}we_{i_2}+g_{m-1}\]
 Then
 \[AA^{m-1}(f)=A(\lambda^{m-1}f +(m-1)\lambda^{m-2}we_{i_2}+g_{m-1})\]
 The first two  terms on the right are:
  \[A(\lambda^{m-1}f)=\lambda^{m-1}(\lambda f+we_{i_2}+g_1)\]
 \[A((m-1)\lambda^{m-2}we_{i_2})=(m-1)\lambda^{m-2}w(\lambda e_{i_2}+h)\]
 Collecting the terms with $f$ and $e_{i_2}$ we have
\[\lambda^mf+m\lambda^{m-1}we_{i_2}\]
 the remainig terms are
\[\lambda^{m-1}g_1+(m-1)\lambda^{m-2}wh+A(g_{m-1})=g_m \in E,\] which proves that the equality holds for all $m$.
\\
From this, we have that for every $m\in{\mathbb{N}}$,
\[0\leq |\langle A^mf,e_{i_2}\rangle |=m|w|\leq ||A^mf-f||\]
since $|\lambda|=1.$ But since $A$ is recurrent we find a subsequence for which  the right limit is 0. Thus $w=0.$

Step 2.  If $k=2$ and $i_2=n$ we are done; if $k=2$ but $i_2\ne{n}$, we can get the remaining $x_j$ and therefore $v_{i_1}$ is obtained.
\\
If $k>2$, we find the values $x_j$ for $i_2<j<i_3$ and find that
for $v=i_3$ again we have that $\sum_{u=1}^{v-1} {a_{v,u}x_u}=0.$  
\\
We continue the above  process and after $k$ steps the required eigenvector $v_{i_1}$ is obtained. 
\\
The same procedure works well even if  two of the indexes $i_s$ are consecutive.
\\
The remaining eigenvectors for $\lambda$ are similarly obtained.
\end{proof}

\begin{rem}
In the finite dimensional case, a similar result is obtained for recurrent upper triangular matrices.
This is a simple consequence of the fact that we can relabel the orthonormal basis by reversing the order, i.e, $f_1=e_n, ~f_2=e_{n-1}\cdots f_n=e_1.$ In $L(\omega)$, that can not be done. Moreover, there are staircase operators that are upper triangular and have all their diagonal entries equal to $0$ which are recurrent since they are hypercyclic. One example of such an operator is the backward weighted shift.

\end{rem}
For the proof of the main result of this section, we will need the following lemma, which we will apply to the finite dimensional square block in the upper left corner of the infinite matrix whose recurrence we will study.

The lemma will also be used in the proof of Theorem 5.12. Another point worth noting is that the spaces involved need not be separable.
\begin{lem} \label{partition}
Let $E=F\oplus G$ be a Frechet space with metric $d$ which satisfies that there is $\delta >0$ with $\delta\big(d(f,0)+d(g,0)\big)\leq d(f+g,0)$ for all $f\in F,g \in G.$ Let $H\in L(F),~C\in L(G),$ and $B:F \longrightarrow G$ linear and continuous. If $f+g$, where $f\in{F}$ and $g\in{G}$, is recurrent for
\[T=\begin{pmatrix}
H & 0\\
B & C
\end{pmatrix},\]
 then $f$ is recurrent for $H.$  
 In addition, if $T$ is recurrent operator, so is $H.$
\end{lem}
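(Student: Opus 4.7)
The plan is to unwind the recurrence of $f+g$ into a statement about $H^n f$ by exploiting the direct-sum structure. First I would show, by an easy induction on $n$, that because $T$ is block lower-triangular with $H$ in the upper-left corner, the $F$-component of $T^n(f+g)$ is exactly $H^n f$. Writing $T^n(f+g) = H^n f + g_n$ with $g_n \in G$, we get the clean decomposition
\[
T^{n_k}(f+g) - (f+g) = (H^{n_k}f - f) + (g_{n_k} - g),
\]
where the first summand lies in $F$ and the second in $G$.

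Next, suppose $f+g$ is recurrent for $T$, so there is a strictly increasing sequence $(n_k)$ with $d\!\left(T^{n_k}(f+g) - (f+g),\, 0\right) \to 0$. Applying the hypothesis on the metric with the two summands above,
\[
\delta\, d(H^{n_k}f - f,\, 0) \;\le\; \delta\bigl(d(H^{n_k}f - f,0) + d(g_{n_k}-g,0)\bigr) \;\le\; d\!\left(T^{n_k}(f+g)-(f+g),0\right),
\]
so $d(H^{n_k}f - f, 0) \to 0$, which is exactly the statement that $H^{n_k}f \to f$ in $F$. Hence $f$ is recurrent for $H$. (Note the reverse triangle-type inequality is really what is doing the work here: without it the $F$-error could in principle be masked by cancellation with the $G$-error.)

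For the second assertion, assume $T$ is recurrent, so $\mathrm{Rec}(T)$ is dense in $E$ by the Costakis--Manoussos--Parissis characterization cited after Definition~1.2. The metric estimate $\delta(d(f,0)+d(g,0)) \le d(f+g,0)$ together with the (automatic) triangle inequality $d(f+g,0) \le d(f,0)+d(g,0)$ makes the projection $\pi_F : E \to F$, $f+g \mapsto f$, continuous and of course surjective, so it sends the dense set $\mathrm{Rec}(T)$ to a dense subset of $F$. By the first part, $\pi_F(\mathrm{Rec}(T)) \subseteq \mathrm{Rec}(H)$, so $\mathrm{Rec}(H)$ is dense in $F$, and applying the same characterization in $F$ concludes that $H$ is recurrent.

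The only delicate point is the metric estimate, but it is given in the hypothesis precisely so that the norm/metric on $E$ is equivalent to the product metric on $F \oplus G$; everything else is bookkeeping. There is no essential obstacle beyond recognizing that the block lower-triangular form of $T$ leaves the $F$-summand invariant at every power.
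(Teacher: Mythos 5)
Your proof is correct and follows essentially the same route as the paper's: both identify the $F$-component of $T^{n}(f+g)$ as $H^{n}f$ (the paper writes out the full block formula for $T^{n}$, you argue by induction, which is equivalent), apply the metric hypothesis $\delta\bigl(d(u,0)+d(v,0)\bigr)\leq d(u+v,0)$ to isolate $d(H^{n_k}f-f,0)$, and deduce the second assertion by projecting a dense set of recurrent vectors onto $F$. The only cosmetic difference is that you make the continuity and surjectivity of $\pi_F$ explicit, which the paper leaves implicit.
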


\begin{proof}
First we calculate
\[T^n=\begin{pmatrix}
H^n & 0\\
\sum_{j=0}^{n-1}C^jBH^{n-1-j}& C^n
\end{pmatrix},\]

Let $f+g$ be recurrent for $T.$ 

Then $H^n(f)\in F$ and
$\sum_{j=0}^{n-1}C^jBH^{n-1-j}(f)+C^n(g)\in G$ and therefore
\[\delta d(H^n(f),f)\leq d(T^n(f+g),f+g) \]
which implies that $f$ is recurrent for $H.$ 

Assume now that $\{f_k+g_k\}_{k=1}^{\infty}$ is dense in $E$ and each vector is recurrent for $T.$ Then $\{f_k\}_{k=1}^{\infty}$ is dense in $F$ and therefore $H$ is recurrent.
\end{proof}
\begin{thm}
  Let $A=(a_{i,j})\in{L(\omega)}$ be lower triangular. If $K=\mathbb{C}$, $A$ is recurrent if and only if  $A=BDB^{-1}$ where $D$ a diagonal matrix with unimodular entries and $B$ and its inverse are in $L(\omega).$
\end{thm}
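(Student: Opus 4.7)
The plan is to prove the equivalence in two parts; the reverse implication is essentially conjugation-invariance, while the forward implication is the substantive one and rests on promoting the finite-dimensional Lemmas \ref{partition} and \ref{finite} to the infinite matrix.

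For $(\Leftarrow)$, assume $A=BDB^{-1}$ with $B,B^{-1}\in L(\omega)$ and $D=\mathrm{diag}(\lambda_j)$ with $|\lambda_j|=1$. Any finitely supported $x$ with $\mathrm{supp}(x)\subseteq\{1,\ldots,N\}$ is recurrent for $D$, because the orbit $\{(\lambda_1^n,\ldots,\lambda_N^n):n\in\mathbb{N}\}\subset\mathbb{T}^N$ is relatively compact, so a subsequence $(n_k)$ can be extracted with every coordinate converging to $1$; along it $D^{n_k}x\to x$ in $\omega$. Density of the finite-support vectors makes $D$ recurrent, and continuity of $B$ and $B^{-1}$ transfers recurrence through the conjugation to $A$.

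For $(\Rightarrow)$, suppose $A=(a_{i,j})$ is lower triangular and recurrent and write $\lambda_j=a_{j,j}$. For each $N$ split $\omega=F_N\oplus G_N$ with $F_N=\mathrm{span}\{e_1,\ldots,e_N\}$ and $G_N=\overline{\mathrm{span}}\{e_j:j>N\}$. Since the coordinates of elements of $F_N$ and $G_N$ are disjointly supported, $p_n(f+g)\ge\max(p_n(f),p_n(g))$, and a short check shows the Fr\'echet metric obeys $\tfrac12(d(f,0)+d(g,0))\le d(f+g,0)$, so Lemma \ref{partition} applies. Lower triangularity kills the upper-right block of $A$, so the $F_N$-block is exactly the principal submatrix $A_N$; Lemma \ref{partition} then yields that $A_N$ is recurrent on $\mathbb{C}^N$, and Theorem 4.1 of \cite{Costakis} forces $A_N$ to be similar to a diagonal matrix with unimodular entries. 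Since the eigenvalues of $A_N$ are $\lambda_1,\ldots,\lambda_N$, we conclude $|\lambda_j|=1$ for every $j$.

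For the diagonalization itself, for each $i$ I would build an eigenvector $v_i=e_i+\sum_{k>i}x_k(i)e_k$ of $A$ for $\lambda_i$, normalized so that $x_k(i)=0$ whenever $a_{k,k}=\lambda_i$ and $k>i$. The eigenvalue equation becomes
\[(a_{k,k}-\lambda_i)\,x_k(i)=-a_{k,i}-\sum_{i<j<k}a_{k,j}\,x_j(i),\]
which is solvable when $a_{k,k}\neq\lambda_i$ and reduces to the consistency requirement $a_{k,i}+\sum_{i<j<k}a_{k,j}\,x_j(i)=0$ at each $k>i$ with $a_{k,k}=\lambda_i$. I would verify this consistency by invoking Lemma \ref{finite} on the (already recurrent) block $A_k$: it produces an eigenvector of $A_k$ of exactly the prescribed shape, so the consistency condition is automatic, and uniqueness of the recursion reconciles the coefficients obtained from $A_k$ and $A_{k'}$ on their overlap. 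Assembling the $v_i$ as columns of a matrix $B$ gives a lower triangular matrix with $1$'s on the diagonal, hence $B\in L(\omega)$ by Proposition 3.1; backward substitution shows $B$ is a bijection of $\omega$, so Corollary 3.2 gives $B^{-1}\in L(\omega)$. The identities $Av_i=\lambda_i v_i$ bundle into $AB=BD$ with $D=\mathrm{diag}(\lambda_i)$, yielding $A=BDB^{-1}$.

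The central difficulty is precisely the consistency condition at indices $k>i$ with $a_{k,k}=\lambda_i$: without the auxiliary information supplied by recurrence, there is no mechanism forcing $a_{k,i}+\sum_{i<j<k}a_{k,j}\,x_j(i)=0$, and the recursion would fail. The architecture of the proof is arranged so that Lemma \ref{partition} globalizes recurrence of $A$ to each $A_k$, and Lemma \ref{finite} then supplies finite-dimensional eigenvectors of exactly the shape required, making the consistency condition automatic. The remaining checks, namely the sub-additive metric inequality needed to apply Lemma \ref{partition} and the row-finiteness of $B$ and $B^{-1}$, are routine.
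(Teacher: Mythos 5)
Your proposal is correct and follows essentially the same route as the paper: Lemma \ref{partition} applied to the decomposition $\omega=F_N\oplus G_N$ to get recurrence of each principal block and hence unimodular diagonal entries via Theorem 4.1 of \cite{Costakis}, then the eigenvector recursion $v_i=e_i+\sum_{k>i}x_k(i)e_k$ with Lemma \ref{finite} supplying the consistency condition at repeated diagonal entries, and assembly of $B$ as the lower triangular matrix of eigenvectors. The converse is likewise the paper's argument (the paper extracts a single subsequence for all $\lambda_j$ by Cantor diagonalization, while you extract one per finitely supported vector, but both suffice for recurrence), and your explicit verification of the metric inequality needed for Lemma \ref{partition} is a detail the paper leaves implicit.
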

\begin{proof}
    Let $A$ be a lower triangular matrix. 
    Assume that $A$ is recurrent. Then, according to the Lemma \ref{partition}, $A$ restricted to $span{\{e_j:j=1,\cdots n\}}$ is recurrent, and we have that the elements on the diagonal are unimodular, Theorem 4.1 in \cite{Costakis}.
   
    For every $k\in{\mathbb{N}}$ there is a vector $v_k\in{\mathbb{C}^{\mathbb{N}}}$ such that $supp{(v_k)}\subset{\{j\in{\mathbb{N}}:k\le{j}\}}$ and $Av_k=a_{k,k}v_k$; moreover, $v_k$ can be chosen such that its $k$th entry is equal to $1$.
    To prove the claim above, we need to show that for each given $k$ there are complex numbers $x_{i,k}$, $i\ge{k}$ such that 
    $$a_{k+1, k}+a_{k+1,k+1}x_{k+1,k}=a_{k,k}x_{k+1, k}$$
    $$a_{k+2, k}+a_{k+2,k+1}x_{k+2,k}+a_{k+2, k+2}x_{k+2, k}=a_{k,k}x_{k+2, k}$$
    \center{$\vdots$}
    $$a_{k+l, k}+a_{k+l,k+1}x_{k+l,k}+\ldots+a_{k+l, k+l}x_{k+l, k}=a_{k,k}x_{k+l, k}$$
    \begin{center}{$\vdots$}
    \end{center}
    \par
    If the diagonal entries $a_{i, i}$ are distinct, the system has a solution. If there are some repeated entries in the diagonal, using Lemma \ref{finite}, we can assign  $x_j=0$ whenever $a_{k,k}=a_{j,j}$ and $k<j.$
    \\
    Let $B$ be the matrix in $L(\omega)$ whose columns are $v_1, v_2, \ldots, v_k, \ldots$. Then $B$ is a lower triangular matrix whose diagonal entries are all equal to $1$ and therefore $B\in L(\omega)$. $B$ is invertible and $B^{-1}$ can be calculated easily and is also lower triangular. Thus, $B^{-1}AB$ is a a diagonal matrix whose diagonal entries are the diagonal entries of the original matrix $A$. We have shown that $A$ is similar to a diagonal operator with unimodular entries.
   \par
    Conversely, let $A$ be a lower triangular matrix similar to a diagonal matrix with unimodular entries $a_{k, k}$. We will assume that $A$ itself is such a diagonal matrix. The result is a consequence of the following: For a numerable subset $\{\lambda_j:j=1,2,\cdots\}$ of the unit circle, by applying Cantor's diagonal method, we can obtain a subsequence $n_p$ such that $\lim_{p\to \infty}\lambda_j^{n_p}=1 \text{  for all  }j.$
       \end{proof}

\section{An Interlude}

\vskip .1in
\par
In this section, we present a non-recurrent multiplication operator defined by a Cantor set of positive measure and we study 
images and preimages  of sets of positive Lebesgue measure on $\mathbb T$ by the function of  $F_n(z)=z^n$.
Let $\mathbb T$ be the unit circle, $\mathbb T \subset \mathbb C.$
The normalized Lebesgue measure of a set $E\subset \mathbb T$ \ will be denoted by $|E|.$
\begin{prop}\label{prop1}
Let $\mathbb T$ be the unit circle.  The mapping $F_n(z)=z^n$ where $z\in \mathbb T$ satisfies that for each measurable set $E$

\[(i)~| F_n^{-1}(E)|=|E|\]
and
\[(ii)~|E|\leq | F_n(E)|\]

\end{prop}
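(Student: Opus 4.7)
My plan is to exploit the fact that $F_n$ is an $n$-fold covering of $\mathbb{T}$ that scales arc length by a factor of $n$ on each sheet. Partition $\mathbb{T}$ into the half-open arcs $A_k = \{e^{i\theta} : \theta \in [2\pi k/n,\, 2\pi(k+1)/n)\}$ for $k = 0, 1, \ldots, n-1$. Each restriction $F_n|_{A_k}$ is a measure-scaling bijection onto $\mathbb{T}$: in normalized Lebesgue measure it multiplies the measure of any subset of $A_k$ by $n$ (indeed, it is the composition of a rotation with the ``stretch'' $e^{i\theta}\mapsto e^{in\theta}$ on $A_0$).

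For (i), given a measurable $E\subseteq \mathbb{T}$, the piece $F_n^{-1}(E) \cap A_k$ is the preimage of $E$ under the bijection $F_n|_{A_k}:A_k\to\mathbb{T}$, so it is measurable of normalized measure $|E|/n$. Since the $A_k$ partition $\mathbb{T}$, the sets $F_n^{-1}(E)\cap A_k$ are pairwise disjoint, and summing gives
\[
|F_n^{-1}(E)| \;=\; \sum_{k=0}^{n-1} \frac{|E|}{n} \;=\; |E|.
\]

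For (ii), the key observation is the purely set-theoretic inclusion $E \subseteq F_n^{-1}(F_n(E))$. To take measures on the right-hand side I first need $F_n(E)$ to be measurable; this is handled by writing $F_n(E) = \bigcup_{k=0}^{n-1} F_n(E \cap A_k)$ and noting that each $F_n|_{\overline{A_k}}$ is a homeomorphism onto $\mathbb{T}$ with Lipschitz inverse, hence sends measurable sets to measurable sets. Monotonicity of Lebesgue measure together with (i) applied to the measurable set $F_n(E)$ then yields
\[
|E| \;\leq\; |F_n^{-1}(F_n(E))| \;=\; |F_n(E)|.
\]

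I do not anticipate any serious obstacle. The only point requiring care is the measurability of $F_n(E)$, which is disposed of by the piecewise-homeomorphism decomposition above; once that is in place, both parts reduce to the measure-scaling behavior of $F_n$ on each fundamental arc $A_k$ and the trivial inclusion $E\subseteq F_n^{-1}(F_n(E))$.
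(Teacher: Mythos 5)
Your proof is correct, and part (ii) takes a genuinely different route from the paper's. For (i) the two arguments are close in spirit: the paper computes the preimage of a finite union of open arcs explicitly and notes it has the same measure, leaving the passage to general measurable sets implicit, while you partition $\mathbb{T}$ into the fundamental arcs $A_k$ and use that $F_n|_{A_k}$ is a measure-scaling bijection, which handles arbitrary measurable $E$ directly. For (ii) the paper again works with a finite union of open intervals, decomposes $E=\cup_k V_k e^{2\pi i k/n}$, and bounds $|E|=\sum_k|V_k|\leq n|V|=|F_n(E)|$ where $V=\cup_k V_k$; you instead deduce (ii) from (i) via the set-theoretic inclusion $E\subseteq F_n^{-1}(F_n(E))$, which is cleaner and, importantly, works for all measurable $E$ at once --- the paper's reduction to open sets is actually delicate for images (outer approximation by open sets controls $|F_n(E)|$ from above, not below), so your argument is the more complete one. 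You are also right that the only real point of care is the measurability of $F_n(E)$. One small quibble there: the property that makes $F_n|_{A_k}$ send Lebesgue measurable sets to Lebesgue measurable sets is that the \emph{forward} map is Lipschitz (so it sends null sets to null sets, and it sends the $\sigma$-compact part of $E$ to a Borel set); a Lipschitz \emph{inverse} by itself would only give a lower bound on the measure of images. Since $F_n$ restricted to each $A_k$ is in fact bi-Lipschitz (away from the endpoint identification, which is a null set), your conclusion stands; just cite the correct half of the bi-Lipschitz property.
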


\begin{proof} (i) It is enough to prove it for open sets consisting of a union of finite intervals, 
\[E=\cup_{j=1}^p(e^{2\pi ia_j},e^{2\pi i b_j}) ~~\text{ with }~~ 0\leq a_1<b_1<\cdots<a_p<b_p\leq 1\]
whose (normalized) measure is $\Sigma_{j=1}^p(b_j-a_j).$ The inverse image is
 \[F_n^{-1}(E)=\cup_{k=0}^{n-1}\cup_{j=1}^p(e^{2\pi i (\frac{a_j+k}{n})},e^{2\pi i (\frac{b_j+k}{n})})\]
 which has the same measure. 
 
 (ii) Let again $E$ be a finite union of open intervals. $E$ can be written as
 \[E=\cup_{k=0}^{n-1}V_ke^{2\pi i (k/n)}\]
where 
\[V_k=\cup_{j=1}^{p_k}(e^{2\pi i\frac{ a_{j,k}}{n}},e^{2\pi i \frac{b_{j,k}}{n}})\]

$0\leq a_{1,k}<b_{1,k}<\cdots<a_{p,k}<b_{p,k}\leq 1$ and $0\leq k \leq n-1.$

Then \[|E|=\sum_{k=0}^{n-1}|V_k|.\]
 Also  if       
$ V=\cup_{k=0}^{n-1}V_k ,$  then    $|V_k|\leq|V|,$ for all k.

Since $F_n(E)=\{\xi^n:\xi \in V\},$ we have $|F_n(E)|=n|V| $ and therefore
\[|E|=\sum_{k=0}^{n-1}|V_k|\le{n|V|}= |F_n(E)|\]
\end{proof}

\begin{rem}
(i) above means that $F_n$ is invariant with respect to the Lebesgue measure, and (ii) that $F_n$ is measuring increasing. The following corollary is a consequence of the second part of this proposition.
\end{rem}

\begin{cor} Let $\{n_j\}_{j=1}^{\infty}$ be any subsequence of $\mathbb N$ and  $E$ a set of positive L-measure. Then $F_{n_j}(z)=z^{n_j}$ cannot go to 1 a.e on $E$ with respect to L-measure.
\end{cor}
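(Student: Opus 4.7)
The plan is to argue by contradiction using Egorov's theorem together with Proposition \ref{prop1}(ii) (the measure-increasing property $|E|\le |F_n(E)|$). Suppose for contradiction that $z^{n_j}\to 1$ almost everywhere on $E$ with $|E|>0$. Uniform convergence, which fails on $E$ generically, can nonetheless be extracted on a large subset via Egorov.

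First, apply Egorov's theorem to produce a measurable subset $E_0\subset E$ with, say, $|E_0|>|E|/2>0$, on which $z^{n_j}\to 1$ uniformly. Thus for every $\varepsilon>0$ there exists $J=J(\varepsilon)$ such that for all $j\ge J$ and all $z\in E_0$, the point $z^{n_j}$ lies in the arc $A_\varepsilon=\{w\in\mathbb T:|w-1|<\varepsilon\}$. In other words, $F_{n_j}(E_0)\subset A_\varepsilon$, so $|F_{n_j}(E_0)|\le |A_\varepsilon|$, and $|A_\varepsilon|\to 0$ as $\varepsilon\to 0$.

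Now invoke Proposition \ref{prop1}(ii), which gives $|E_0|\le |F_{n_j}(E_0)|$ for every $j$. Combining the two inequalities yields $|E_0|\le |A_\varepsilon|$ for every $\varepsilon>0$, which forces $|E_0|=0$ and contradicts $|E_0|>|E|/2>0$.

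The only subtle step is applying Proposition \ref{prop1}(ii) to $E_0$, which need not be a finite union of open intervals. The statement in the proposition is for arbitrary measurable sets but was proved only in the elementary case; the extension to general measurable $E_0$ follows by standard outer-regular approximation from above by open sets (which are countable unions of arcs) together with the countable additivity and monotonicity of Lebesgue measure, since $F_n$ is a continuous finite-to-one map of $\mathbb T$ onto itself. I expect this to be the main (though routine) technicality; the core dynamical content is simply that the measure-increasing property of $F_n$ is incompatible with the images collapsing to a small arc around $1$.
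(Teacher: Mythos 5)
Your main argument is correct and is essentially the paper's: both proofs play the measure-increasing property $|E_0|\le|F_n(E_0)|$ of Proposition \ref{prop1}(ii) against the fact that convergence to $1$ would force the images into arbitrarily small arcs about $1$. The paper's own proof is two lines --- fix an arc $V$ about $1$ with $|V|=c<|E|$ and note $|F_{n_j}(E)\setminus V|\ge|E|-c$, so the image always contains points bounded away from $1$ --- and it leaves implicit the passage from a.e.\ convergence to the uniform statement that is actually being contradicted; your explicit appeal to Egorov's theorem supplies exactly that bridge, so your write-up is, if anything, the more complete of the two. One concrete correction: your proposed justification for extending Proposition \ref{prop1}(ii) from finite unions of arcs to a general measurable $E_0$ goes the wrong way. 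Approximating $E_0$ \emph{from above} by an open $U\supset E_0$ gives $|U|\le|F_n(U)|$ together with $F_n(E_0)\subset F_n(U)$, which bounds $|F_n(E_0)|$ from above by something large, not from below by $|E_0|$. Either use inner regularity (take compact $K\subset E_0$ with $|K|>|E_0|-\delta$, so that $F_n(K)\subset F_n(E_0)$ is compact), or, better, prove (ii) directly for every measurable set: write $E_0=\bigcup_{k=0}^{n-1}A_k$ with $A_k$ the part of $E_0$ in the $k$-th fundamental arc; since $F_n$ maps each such arc bijectively onto $\mathbb T$ (up to an endpoint) while multiplying arclength by $n$, the sets $F_n(A_k)$ are measurable with $|F_n(A_k)|=n|A_k|$, whence $|F_n(E_0)|\ge\max_k n|A_k|\ge\sum_{k}|A_k|=|E_0|$. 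With that repair your proof is complete.
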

\begin{proof} Let $0<c<|E|$ and let $V$ be an open set centered at $1\in \mathbb T$ such that $|V|=c.$ Then 
\[|F_{n_j}(E)\setminus V|\geq|E|-c,\]
and any $\xi \in F_{n_j}(E)\setminus V$ satisfies that $\sin(c/2)|\leq |\xi-1|.$ \end{proof}

In the next proposition we improve the above corollary by showing that $F_n(E)$ is almost dense in the unit circle.

Recall that for a sequence of sets $A_n$ the $\overline {lim }$ is defined as
\[\limsup _{k\to\infty}A_n=\cap_{k=1}^{\infty}\cup_{j=k}^{\infty}A_j.\]

\begin{prop}
 Let $\{n_j\}_{j=1}^{\infty}$ be any subsequence of $\mathbb N$ and  $E$ a set of positive L-measure. Then 
\[|\limsup _{j\to\infty} F_{n_j}(E)|=1.\]    
\end{prop}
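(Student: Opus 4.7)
The plan is to prove the stronger pointwise-style claim that $|F_{n_j}(E)| \to 1$ as $j \to \infty$, and then deduce the limsup result from the reverse Fatou lemma for sets of finite measure: for any sequence of measurable sets $A_j \subset \mathbb{T}$, one has $|\limsup_j A_j| \geq \limsup_j |A_j|$, so $|F_{n_j}(E)| \to 1$ together with $|\limsup_j F_{n_j}(E)| \leq 1$ will force equality.

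The heart of the argument is the following local-to-global observation. Fix an integer $n$ and partition $\mathbb{T}$ into the $n$ closed arcs $J_k = \{e^{2\pi i t} : t \in [k/n, (k+1)/n]\}$ for $k = 0, 1, \ldots, n-1$, each of length $1/n$. The restriction $F_n|_{J_k}$ is a bijection onto $\mathbb{T}$ that rescales length by $n$, so for any measurable $A \subset J_k$ we have $|F_n(A)| = n|A|$. Consequently, for each $k$,
\[
|F_n(E)| \;\geq\; |F_n(E \cap J_k)| \;=\; n\,|E \cap J_k|.
\]
Thus it suffices to exhibit, for each large $j$, some arc $J$ of length $1/n_j$ with $n_j\,|E \cap J| \to 1$.

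To produce such arcs I would invoke the Lebesgue density theorem: since $|E|>0$, almost every point $z_0 \in E$ is a point of density, meaning $|E \cap I|/|I| \to 1$ for every sequence of arcs $I \ni z_0$ with $|I| \to 0$. Fix such a $z_0$ once and for all. Given $\varepsilon>0$, choose $\delta>0$ so that every arc $I$ containing $z_0$ with $|I|<\delta$ satisfies $|E\cap I|/|I| > 1-\varepsilon$. For each $n_j > 1/\delta$, the partition above puts $z_0$ into some arc $J_{k(j)}$ of length $1/n_j < \delta$, so $n_j\,|E \cap J_{k(j)}| > 1 - \varepsilon$. Plugging into the displayed inequality gives $|F_{n_j}(E)| > 1-\varepsilon$ for all sufficiently large $j$, and since $\varepsilon$ is arbitrary, $|F_{n_j}(E)| \to 1$.

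The only minor subtlety is the use of the density theorem with arcs containing $z_0$ but not necessarily centered at $z_0$: the arc $J_{k(j)}$ may have $z_0$ near its boundary. This is handled by the standard comparison $|E \cap J_{k(j)}|/|J_{k(j)}| \geq |E \cap B(z_0, |J_{k(j)}|)|/|J_{k(j)}| - 1$, whose right-hand side still tends to $1$ at any Lebesgue density point. Apart from that routine point, the only other ingredient is the reverse Fatou lemma, which applies because $\mathbb{T}$ has finite measure and is immediate from writing $\limsup_j F_{n_j}(E) = \bigcap_k \bigcup_{j\geq k} F_{n_j}(E)$ and using continuity of measure from above.
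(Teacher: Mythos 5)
Your proof is correct and follows essentially the same route as the paper's: both hinge on the Lebesgue density theorem at a point of $E$ together with the fact that $F_{n_j}$ restricted to an arc of length $1/n_j$ is an injective, measure-scaling map onto (almost all of) $\mathbb{T}$, giving $|F_{n_j}(E)|\ge 1-\varepsilon$ for large $j$. The only differences are cosmetic: the paper centers the short arc at the density point (sidestepping your off-center adjustment), and it leaves implicit the reverse-Fatou step that you spell out.
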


\begin{proof}
Since $|E|>0$ almost all its points are Lebesgue points. Let $\xi \in E$ be one of them. For $\epsilon>0$ let $\delta >0$ such that for $\xi$ centered in the open set $V$ holds that
\[\text{   if   }  |V|<\delta \text{   then   } \frac{|V\setminus (V\cap E)|}{|V|}<\epsilon.\]
Let $|V|=\frac{1}{n_j}<\delta.$ Then $F_{n_j}$restricted to $V$ is one-to-one and $F_{n_j}(V)$ is $\mathbb T$ except a point.
Moreover $|F_{n_j}(E\cap V)|>1-\epsilon$ which implies that for each open interval $W\subset \mathbb T$ there is a point $\nu \in F_{n_j}(E)\cap W.$ We also have that
\[|\limsup _{j\to\infty} F_{n_j}(E)|\geq 1-\epsilon.\]
Since $\epsilon$ could be arbitrarily small, the proposition is proved. \end{proof}

\begin{quest}
Let $\{n_j\}_{j=1}^{\infty}$ be any subsequence of $\mathbb N$ and $B\subset \mathbb T$ be a nondenumerable set. Does there exist $\xi \in B $  such that \[\overline{\{\xi^{n_j}:k\leq j\}}=\mathbb T ?\]
\end{quest}
If the $n_j$ is the full sequence the answer is yes since there exist a $\xi$ which is not a root of the unit, i.e, $\xi=e^{2\pi i s}$ with $s$ not a rational number. This is Kroenecker's theorem. Moreover, in this case Weyl proved that the sequence is equidistributed, p. 105 in \cite{Stein}.

\vskip .1in

{\bf Example.} This is related to Example 7.9 in \cite{Costakis}. It is well-known that there are Cantor type sets that have positive measure. To construct one it suffices to modify the length of the interval in the complement of the Cantor set. Our Cantor set has positive measure $1-c$ where $0<c<1;$ and the corresponding operator $M_{\phi}$ is not recurrent. We define the open and dense set $V$ in a similar fashion to the complement of the canonical Cantor set contained in $[0,1].$ We set

\[V=\cup_{k=1}^{\infty}\cup_{j=1}^{2^{k-1}}V_{j,k}  ~~\text{ and }~~F=[0,1]\setminus V=\cap_{k=1}^{\infty}\cup_{s=1}^{2^k}F_{s,k}\]
where $V_{j,k}$ is a middle third but now $|V_{j,k}|=c/3^k$ and the open intervals are enumerated from left to right. 
For each $k$,  $F_{s,k}$ are disjoint closed intervals with the same length and they are also enumerated from left to right. They are defined as
\[\cup_{s=1}^{2^k}F_{s,k}=[0,1]\setminus \cup_{t=1}^k \cup_{j=1}^{2^{t-1}}V_{j, t}.\] 
Observe that
\[F_{2j-1,k}\cup F_{2j,k}\subset F_{j,k-1}.\]

The function $\phi$ is defined on $[0,1]$ as $\phi(t)=e^{2\pi if(t)}$ where $f(t)$ on
$V$ is such that  
\[f|_{V_{1,1}}=1/2;~f|_{V_{1,2}}=1/4, ~f|_{V_{2,2}}=3/4; \cdots\] 
and so forth, and then $f$ is extended continuously to each of the points of the set $F$ defined above. As in the case of the devil's staircase, the function $f$ is one to one on $F$ except in the endpoints of the components of $V.$
Our next result is the key for showing the no recurrence of  
\[M_{\phi}: L^2([0,1],d\theta)\longrightarrow  L^2([0,1],d\theta).\] 

\begin{prop} \label{prop5} Let $T\subset F$ be a closed set of positive measure . Then
\[|T|\leq |f(T)|\leq \frac{1}{1-c}|T|.\]
\end{prop}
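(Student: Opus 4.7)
The plan is to bracket $T$ between natural ``cylinders'' coming from the Cantor construction. For each $k \ge 1$, let $J_k = \{s \in \{1,\dots,2^k\}: T \cap F_{s,k} \ne \emptyset\}$ and $T_k = \bigcup_{s \in J_k} F_{s,k}$. The total measure of $V$-intervals removed through step $k$ is $\sum_{t=1}^{k} 2^{t-1}(c/3^t) = c(1-(2/3)^k)$, and these removals split $[0,1]$ equally among the $2^k$ surviving pieces $F_{s,k}$, each of common length $\ell_k = (1 - c + c(2/3)^k)/2^k$. By the defining rule for $f$ (dyadic values on the $V$-intervals, continuous extension to $F$), $f(F_{s,k}) = [(s-1)/2^k,\ s/2^k]$, a closed interval of length $2^{-k}$; since adjacent pieces $f(F_{s,k})$ and $f(F_{s+1,k})$ share only a single point,
$$|T_k| = |J_k|\,\ell_k, \qquad |f(T_k)| = \frac{|J_k|}{2^k}, \qquad \frac{|f(T_k)|}{|T_k|} = \frac{1}{1 - c + c(2/3)^k}.$$

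Next I pass to the limit. The nesting $T_{k+1} \subseteq T_k$ is immediate, because every $F_{s,k+1}$ that meets $T$ lies in some $F_{s',k}$ which then also meets $T$. To see $\bigcap_k T_k = T$: if $x \notin T$, closedness of $T$ gives a neighborhood $U$ of $x$ disjoint from $T$, and since $\ell_k \to 0$ the unique $F_{s_k,k}$ containing $x$ eventually lies in $U$, so $s_k \notin J_k$ and $x \notin T_k$. Downward continuity of Lebesgue measure thus gives $|T_k| \to |T|$. The same limit for the images, $f(T_k) \searrow f(T)$, reduces to the inclusion $\bigcap_k f(T_k) \subseteq f(T)$: given $y \in \bigcap_k f(T_k)$, pick $x_k \in T_k$ with $f(x_k) = y$, extract a subsequence $x_{k_j} \to x \in [0,1]$ by compactness, use continuity of $f$ to get $f(x) = y$, and observe that $x \in T_{k_0}$ for every fixed $k_0$ (since $x_{k_j} \in T_{k_j} \subseteq T_{k_0}$ for $k_j \ge k_0$, and $T_{k_0}$ is closed); hence $x \in \bigcap_k T_k = T$. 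By downward continuity again, $|f(T_k)| \to |f(T)|$.

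Combining these,
$$|f(T)| \;=\; \lim_{k\to\infty}|f(T_k)| \;=\; \lim_{k\to\infty}\frac{|T_k|}{1 - c + c(2/3)^k} \;=\; \frac{|T|}{1-c},$$
which in particular yields the stated sandwich $|T| \le |f(T)| \le |T|/(1-c)$ (indeed with equality in the upper bound), the lower inequality being the trivial $1 \le 1/(1-c)$.

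The main obstacle is the inclusion $\bigcap_k f(T_k) \subseteq f(T)$: limits of continuous images are not well-behaved in general, so the argument must route through compactness of $[0,1]$ together with closedness and nesting of the $T_k$. A secondary bookkeeping point is the measure-zero one-point overlap of adjacent $f(F_{s,k})$, which is precisely what makes the clean identity $|f(T_k)| = |J_k|/2^k$ hold without inclusion-exclusion corrections.
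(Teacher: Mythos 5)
Your proof is correct and follows essentially the same route as the paper's: the same cylinder approximation $T_k=\bigcup_{s\in N_k}F_{s,k}$, the same length computation $2^k|F_{1,k}|=1-c\bigl(1-(2/3)^k\bigr)$, and the same passage to the limit through the nested intersections $T=\bigcap_k T_k$ and $f(T)=\bigcap_k f(T_k)$. You make explicit two points the paper leaves implicit (the compactness argument for $\bigcap_k f(T_k)\subseteq f(T)$ and the measure-zero overlaps of adjacent images $f(F_{s,k})$), and your bookkeeping in fact yields the sharper conclusion $|f(T)|=|T|/(1-c)$, of which the stated two-sided bound is an immediate consequence.
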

\begin{proof}
Since the length of $F_{s,k}$ for $s=1,\cdots,2^k$ is $|F_{1,k}|$ it follows that
\[2^k|F_{1,k}|=1-\sum_{s=1}^k\frac{c2^{s-1}}{3^s}=1-c\Big(1-\frac{2^k}{3^k}\Big).\]

Thus $|F_{1,k}|\geq 2^{-k}(1-c)$ and since $|f(F_{1,k})|=2^{-k}$ we have that
\[|F_{1,k}|\leq |f(F_{1,k})|\leq \frac{1}{1-c}|F_{1,k}|.\]

For each $k$, let's define 
\[N_k=\{s:F_{s,k}\cap T\neq \emptyset\} ~~\text{  and  }~~ T_k=\cup_{s\in N_k}F_{s,k}.\]
Then
\[ T=\cap_{k=1}^{\infty}T_k~~\text{  and  }~~f(T)=\cap_{k=1}^{\infty}~f(T_k)\]
and since $T_k\subset T_{k-1}$ the measure of $T$ and $f(T)$ satisfy
\[|T|=\lim_{k\to \infty}|T_k|~~\text{  and  }~~|f(T)|=\lim_{k\to \infty}|f(T_k)|\]
which proves the proposition. \end{proof}
Since $M_{\phi}^nh(t)=e^{n2\pi if(t)}h(t),$  Proposition \ref{prop1} (ii) and Proposition \ref{prop5} imply that $(\phi)^n(t)=e^{n2\pi if(t))}$ doesn't go to 1 almost everywhere. Thus
Theorem 7.6 of \cite{Costakis} says that $M_{\phi}$ is not recurrent.

    \section{Composition Operators on Function Spaces}
   
    Let $\mathbb{D}$ denote the open unit disk in the complex plane. For each sequence of positive numbers $\beta=\{\beta_n\}_n$ the weighted Hardy space $H^2(\beta)$ is the Hilbert space of functions analytic on $\mathbb{D}$ for which the norm induced by the inner product
    $$\big{<}\sum_{n=0}^\infty{a_nz^n}, \sum_{n=0}^\infty{b_nz^n}\big{>}=\sum_{n=0}^\infty{a_n\overline{b_n}\beta_n^2}$$
    is finite. The monomials form a complete orthogonal system and so they are dense in $H^2(\beta).$ Also, convergence in $H^2(\beta)$ implies uniform convergence on compact subsets of the unit disk. While we will study operators on $H^2(\beta)$ in general, we will focus on weighted Dirichlet spaces $S_\nu$, which are the weighted Hardy spaces with weights $\beta_n=(n+1)^\nu$, where $\nu$ is a real number. If $\nu_1>\nu_2$, then $S_{\nu_1}$ is strictly contained in $S_{\nu_2}$; if $\nu>\frac{1}{2}$, then $S_\nu$ is contained in the disk algebra $\mathcal{A}$.
    In the following, we will be interested in the composition operators induced by linear fractional maps (LFT). If $\phi$ is a linear fractional map that maps the open unit disk into itself, then the composition operator $C_\phi$, defined by $C_\phi(f)=f\circ{\phi}$, is bounded on all the $S_\nu$ spaces (Proposition 1.3 in \cite{Gallardo-Montes}).
    
    \begin{rem}
    Let $\alpha$ and $\nu$ be real numbers with $\alpha >\nu$. Since $S_\alpha\subset{S_\nu}$, $S_\alpha$ is dense in $S_\nu$ (polynomials are dense in both) and $||f||_\nu\le{||f||_\alpha}$, by the Comparison Principle (Proposition 2.1 in \cite{Karim}) we have that every continuous operator on $S_\nu$ that maps $S_\alpha$ to itself and which is recurrent on $S_\alpha$ is also recurrent on $S_\nu$. In particular, every composition operator induced by a linear fractional self-map of $\mathbb{D}$ that is recurrent on $S_\alpha$ will also be recurrent on $S_\nu$, whenever $\nu<\alpha$. Also, if such $T$ is not recurrent on $S_\nu$ implies that $T$ is not recurrent on $S_\alpha$ with $\alpha>\nu$.

    The following result provides additional information about the  relation between different spaces $S_{\nu}.$ 
    \end{rem}
    Let $X$ and $Y$ be Banach spaces with $X \subset Y.$ Recall that we say that $X$ is compactly embedded in $Y,$ denoted by $X \subset \subset Y$ if

(i) $$||x||_Y \leq C ||x||_X $$ for some constant $C$ and all $x\in X.$
\\
and
\\
(ii) Every bounded sequence in $X$ is precompact in $Y.$

The following proposition should be folklore.

\begin{prop} Let $X$ and $Y$ be Banach spaces with $X$  compactly embedded in $Y.$
If $Y$ is infinite dimensional, then $X$ is a first category set in $Y.$
\end{prop}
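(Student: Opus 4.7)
The plan is to exhibit $X$ explicitly as a countable union of nowhere dense subsets of $Y$. The natural candidates come from the $X$-ball filtration: set $B_n = \{x \in X : \|x\|_X \leq n\}$, so that $X = \bigcup_{n=1}^\infty B_n$. I want to argue that the closure of each $B_n$ \emph{in $Y$} is nowhere dense in $Y$.

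First I would use property (ii) of compact embedding: since $B_n$ is bounded in $X$, it is precompact in $Y$, i.e.\ its closure $\overline{B_n}^Y$ is a compact subset of $Y$. So $X \subset \bigcup_{n=1}^\infty \overline{B_n}^Y$, and each set on the right is closed in $Y$. To conclude that each is nowhere dense it suffices to show it has empty interior in $Y$.

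Next I would invoke the infinite-dimensionality of $Y$ via Riesz's lemma. If some $\overline{B_n}^Y$ contained a $Y$-open ball $B_Y(y_0, r)$, then translating by $-y_0$ and rescaling would make the closed unit ball of $Y$ a closed subset of a compact set, hence compact; this would contradict Riesz's lemma, which says the closed unit ball of an infinite-dimensional Banach space is never compact. Therefore each $\overline{B_n}^Y$ has empty interior, is closed, and hence is nowhere dense in $Y$. The expression $X \subset \bigcup_{n=1}^\infty \overline{B_n}^Y$ then displays $X$ as a first category subset of $Y$.

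There is no real obstacle here — the only point requiring a bit of care is the justification that compact subsets of an infinite-dimensional Banach space have empty interior, which is a standard corollary of Riesz's lemma and should be stated explicitly (or cited). Note that property (i) of the compact embedding (the continuous inclusion) is not strictly needed for the category conclusion once we know $X \subset Y$; it only guarantees that the identity inclusion is continuous, which is implicit in the usual notion of compact embedding.
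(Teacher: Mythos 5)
Your proof is correct and follows essentially the same route as the paper's: the $Y$-closure of each $X$-ball is compact by the embedding and has empty interior by Riesz's lemma, so $X$ is a countable union of nowhere dense sets. You merely spell out explicitly the ball filtration and the translation/rescaling step that the paper leaves implicit.
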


\begin{proof}
 The closure of the unit ball of $X$ with the norm $|| \cdot||_Y$  is compact and also is nowhere dense; otherwise, the unit ball of $Y$ would be compact, which is not. 
\end{proof}

\begin{prop} Let $\alpha=\{\alpha_k\}_k$ and $\beta=\{\beta_k\}_k$ be two sequences of positive real numbers such that
 \[0<\alpha_k \leq \beta_k \text{ for } k=0,1,2,\cdots \text{ and } 
\lim_{k\to \infty} \frac{\alpha_k}{\beta_k}=0.\]
Then \[\mathcal H^2(\beta) \subset \subset\mathcal H^2(\alpha).\]
\end{prop}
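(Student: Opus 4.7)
The plan is to verify the two conditions in the definition of compact embedding. The continuous inclusion (i) is essentially free: for $f=\sum a_n z^n$ we have
\[
\|f\|_\alpha^2 = \sum_{n=0}^\infty |a_n|^2 \alpha_n^2 \leq \sum_{n=0}^\infty |a_n|^2 \beta_n^2 = \|f\|_\beta^2,
\]
since $\alpha_n \leq \beta_n$, so one can take $C=1$. The heart of the matter is condition (ii), the precompactness.

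To prove (ii), I would take a bounded sequence $(f^{(m)})$ in $\mathcal{H}^2(\beta)$ with $\|f^{(m)}\|_\beta \leq M$ and extract a subsequence converging in $\mathcal{H}^2(\alpha)$. Writing $f^{(m)} = \sum_n a_n^{(m)} z^n$, the bound gives $|a_n^{(m)}| \leq M/\beta_n$ for all $m,n$, so for each fixed $n$ the scalar sequence $(a_n^{(m)})_m$ is bounded. A standard Cantor diagonal argument then yields a subsequence, which I relabel $(f^{(m)})$, such that $a_n^{(m)} \to a_n$ for every $n$. The candidate limit is $f = \sum_n a_n z^n$; Fatou-type control on coefficients gives $f \in \mathcal{H}^2(\beta)$ with $\|f\|_\beta \leq M$.

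The main step is then to show $\|f^{(m)} - f\|_\alpha \to 0$. Given $\varepsilon > 0$, use the hypothesis $\alpha_n/\beta_n \to 0$ to choose $N$ so that $\alpha_n/\beta_n < \varepsilon$ for $n > N$. Split the norm:
\[
\|f^{(m)} - f\|_\alpha^2 = \sum_{n=0}^{N} |a_n^{(m)} - a_n|^2 \alpha_n^2 + \sum_{n=N+1}^\infty |a_n^{(m)} - a_n|^2 \alpha_n^2.
\]
The tail is bounded by
\[
\varepsilon^2 \sum_{n=N+1}^\infty |a_n^{(m)} - a_n|^2 \beta_n^2 \leq \varepsilon^2 (\|f^{(m)}\|_\beta + \|f\|_\beta)^2 \leq 4M^2 \varepsilon^2,
\]
uniformly in $m$. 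The finite head is a sum of $N+1$ terms each of which tends to $0$ by coefficientwise convergence, so it is less than $\varepsilon^2$ for $m$ large. Combining, $\limsup_m \|f^{(m)} - f\|_\alpha^2 \leq (4M^2 + 1)\varepsilon^2$, and since $\varepsilon$ is arbitrary the subsequence converges to $f$ in $\mathcal{H}^2(\alpha)$.

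I do not expect genuine obstacles here; the only care needed is the standard tail-versus-head splitting that turns the decay $\alpha_n/\beta_n \to 0$ into a uniform smallness on the tail, matched with the finite-dimensional coefficient convergence on the head. This is the precise analogue of the classical compactness of inclusions $\ell^2_w \subset \ell^2_v$ when the weight ratio tends to zero, specialized to the weighted Hardy setting.
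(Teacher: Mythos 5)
Your proof is correct and follows essentially the same route as the paper: the continuous inclusion is immediate from $\alpha_k\le\beta_k$, and precompactness is obtained by extracting a coefficientwise convergent subsequence (your Cantor diagonal argument is the sequential form of the paper's appeal to Tychonoff on the product of disks $\{|w|\le 1/\beta_k\}$) followed by the same head--tail splitting that converts $\alpha_k/\beta_k\to 0$ into uniform smallness of the tail. The only cosmetic difference is that you exhibit the limit $f$ explicitly and prove convergence to it, whereas the paper shows the subsequence is Cauchy in $\mathcal H^2(\alpha)$ and invokes completeness.
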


\begin{proof} Let $f_m$ in $\mathcal H^2(\beta)$ be such that $||f_m||_{\mathcal H^2(\beta)}\leq 1$ for all $m,$
and \[f_m(z)=\sum_{k=0}^{\infty}c_{k,m}z^k.\]
Let $c_m=\{c_{k,m}\}_k.$ Then for each $m$
\[c_m\in X_{k=0}^{\infty}\{w:|w|\leq\frac{1}{\beta_k}\}.\] 
Tihonov's theorem says that the last set, endowed with the product topology, is compact. By relabeling if necessary we may assume that $\{c_m\}_m$ is a Cauchy sequence in the product space. We prove now that  $\{f_m\}_m$  is also a Cauchy sequence in $\mathcal H^2(\alpha).$
Let $\epsilon >0.$  Let $N_0$ be such that $N_0<k$ implies that 
\[\frac{\alpha_k}{\beta_k}<\frac{\epsilon}{2\sqrt 2}\]
There is also an $N_1$ such that if $N_1\leq n,m$ then
\[|c_{k,m}-c_{k,n}|<\frac{\epsilon}{\sqrt {2\sum_{k=0}^{N_0}\alpha_k^2}} \text{ for } 0\leq k\leq N_0.\]
For  $N_1\leq n,m$
    \[ ||f_m-f_n||^2_{\mathcal H^2(\alpha)}=\]\[\sum_{k=0}^{N_0}|c_{k,m}-c_{k,n}|^2\alpha_k^2
  +  \sum_{k=N_0+1}^{\infty}|c_{k,m}-c_{k,n}|^2\alpha_k^2 \leq\]
    
    \[\frac{\epsilon^2}{2} +\frac{\epsilon^2}{8}||f_m-f_n||^2_{H^2(\beta)}\leq \epsilon^2\]
    This shows that  $\{f_m\}_m$ is also a Cauchy sequence in $\mathcal H^2(\alpha),$ and therefore the proof is complete \end{proof}

\begin{rem}
If $\nu<\mu$ then $S_{\mu} \subset \subset S_{\nu}$ and  $S_{\mu}$ is a first category subset of $S_{\nu}.$
\end{rem} 

The following theorem summarizes the results in Section 2 of the paper by Karim, Benchihev and Amouch \cite{Karim}, which are also illustrated in their Table 3. One of the ingredients they use is the Comparison Principle for recurrent operators, Proposition 2.1 in \cite{Karim}, which is modeled by the same principle for hypercyclic and supercyclic operators. They also use the Denjoy-Wolff theorem, which is a standard and very important tool for dealing with analytic self maps in the unit disk. 
\begin{thm} \label{lft}
Let $\phi$ be a linear fractional map of $D$ with $\phi(D)\subset{D}$ and let $C_\phi$ be the composition operator with symbol $\phi$ on the space $S_\nu$, $\nu\in{\mathbb{R}}$.
\begin{enumerate}
    \item If $\phi$ is not an elliptic automorphism, then $C_\phi$ is recurrent if and only if it is hypercyclic.
    \item If $\phi$ is an elliptic automorphism, then $C_\phi$ is recurrent on $S_\nu$ for all values of $\nu$. 
\end{enumerate}
 
\end{thm}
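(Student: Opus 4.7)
The plan is to argue by the Denjoy--Wolff classification of linear fractional self-maps of $\mathbb{D}$: elliptic, parabolic, and hyperbolic, with automorphism versus non-automorphism in each case. Since hypercyclic operators are always recurrent, one direction in part (1) is free, and we have to prove in each non-hypercyclic case that $C_\phi$ fails to be recurrent on $S_\nu$.

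For part (2), suppose $\phi$ is an elliptic automorphism with fixed point $a\in\mathbb{D}$. Choose a disk automorphism $\psi$ sending $0$ to $a$, so that $\psi^{-1}\circ\phi\circ\psi$ is a rotation $\rho(z)=e^{i\theta}z$. Because $C_\psi$ is bounded and invertible on $S_\nu$ with inverse $C_{\psi^{-1}}$, the operator $C_\phi$ is similar to $C_\rho$; recurrence is preserved under similarity, so it suffices to handle $C_\rho$. But $C_\rho$ is diagonal on the orthogonal monomial basis with eigenvalues $\{e^{ik\theta}\}_{k\ge 0}$, and is therefore an isometry of $S_\nu$. A standard Cantor diagonal extraction (exactly as in the converse direction of Theorem 3.7) produces a subsequence $(n_p)$ with $e^{ikn_p\theta}\to 1$ for every $k$, giving SOT convergence of $C_\rho^{n_p}$ to $I$ on the dense set of polynomials; the uniform isometry bound extends this convergence to all of $S_\nu$. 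Thus $C_\rho$, and hence $C_\phi$, is rigid, in particular recurrent.

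For part (1), the easy sub-cases are the elliptic non-automorphism and the hyperbolic non-automorphism: their iterates $\phi_n$ converge uniformly on $\overline{\mathbb{D}}$ to the Denjoy--Wolff point $p$. For sufficiently large $\nu$ (so that $S_\nu\subset\mathcal{A}$) this forces $C_\phi^n f\to f(p)\cdot 1$ in $S_\nu$; if $C_\phi$ were recurrent there would be a dense set of $f$'s with $C_\phi^{n_k}f\to f$, forcing $f\equiv f(p)$, a contradiction. For smaller $\nu$ the conclusion is transported downward by the Comparison Principle (Proposition 2.1 in \cite{Karim}): if $C_\phi$ were recurrent on $S_\nu$, then because $S_\alpha$ is dense in $S_\nu$ with a stronger norm, recurrence would pass to $S_\alpha$ for all $\alpha>\nu$, reducing to the already-settled case. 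For the hyperbolic and parabolic automorphism sub-cases, the same Comparison Principle, combined with the known hypercyclicity thresholds from \cite{Gallardo-Montes}, rules out recurrence in the non-hypercyclic regime of $\nu$.

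The main obstacle, and the one the paper advertises as requiring a genuinely new proof, is the \emph{parabolic non-automorphism} case. Here $\phi$ has a single fixed point $w\in\mathbb{T}$ with $\phi'(w)=1$, and the iterates $\phi_n(z)\to w$ tangentially, so one does not get uniform convergence on $\overline{\mathbb{D}}$ and the naive ``limit is a constant'' argument fails for any $\nu$. The plan is to conjugate by the Cayley-type Möbius transform sending $\mathbb{D}$ to the right half-plane $\mathbb{H}$ with $w\mapsto 0$, turning $\phi$ into a translation $\zeta\mapsto\zeta+a$ with $\mathrm{Re}\,a>0$, and $C_\phi$ into the corresponding translation operator on the half-plane model of $S_\nu$. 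Using the test-function tools from the ``An appetizer'' section on page 52 of \cite{Gallardo-Montes} (normalized reproducing-kernel-type functions concentrated near $0$ in $\mathbb{H}$, whose translates by $na$ become asymptotically orthogonal to the original), I would exhibit a fixed function $h$ whose $C_\phi^n$-iterates satisfy a uniform lower bound $\|C_\phi^n h-h\|_\nu\ge c>0$ for all $n\ge 1$. A dense subset of recurrent vectors around $h$ would contradict this lower bound by a small-perturbation argument, and recurrence is ruled out. The delicate point I expect to have to work through carefully is calibrating the test function so that the estimate survives in every $S_\nu$ uniformly in $n$, since the weights $(k+1)^{2\nu}$ change the geometry of the model space and one cannot simply quote an $H^2$ computation.
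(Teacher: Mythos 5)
Your part (2) is correct and matches the paper's remark that $C_\phi$ is in fact rigid in the elliptic automorphism case. Part (1), however, has two genuine gaps. First, you apply the Comparison Principle backwards. The principle (Proposition 2.1 in \cite{Karim}, and the Remark preceding the theorem) says that if $\alpha>\nu$, so that $S_\alpha\subset S_\nu$ densely with $\|\cdot\|_\nu\le\|\cdot\|_\alpha$, then recurrence on the \emph{smaller} space $S_\alpha$ implies recurrence on the \emph{larger} space $S_\nu$; equivalently, non-recurrence passes upward from $S_\nu$ to $S_\alpha$. Your claim that recurrence on $S_\nu$ for small $\nu$ ``would pass to $S_\alpha$ for all $\alpha>\nu$'' is the false converse, and is contradicted by the hyperbolic automorphism itself, which is hypercyclic (hence recurrent) on $S_\nu$ for $\nu<\frac12$ but not recurrent for $\nu\ge\frac12$. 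Relatedly, for the hyperbolic and parabolic automorphisms the entire content of the theorem in the regime $\nu\ge\frac12$ is that non-hypercyclicity there forces non-recurrence; you cannot deduce this from ``the known hypercyclicity thresholds,'' since non-hypercyclic recurrent operators abound. The paper settles the crucial case $\nu=\frac12$ directly: it decomposes $\mathcal D=\mathcal D_0\oplus\mathbb C$, uses Lemma 3.11 to pass recurrence to the compression $\tilde C_\phi$ on $\mathcal D_0$, invokes the fact that this compression is unitary so that \emph{every} vector of $\mathcal D_0$ (in particular $z$) is recurrent, and derives $\phi_{n_k}\to z+1$ pointwise, contradicting Denjoy--Wolff; only then does the Comparison Principle, used in the correct direction, dispose of $\nu>\frac12$.

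Second, the parabolic non-automorphism case is only sketched, and the sketch rests on a step that fails: exhibiting a single $h$ with $\|C_\phi^n h-h\|_\nu\ge c>0$ does not preclude a dense set of recurrent vectors, and the small-perturbation argument you propose to bridge that gap requires $\sup_n\|C_\phi^n\|<\infty$, which is false here (already on $H^2$ one has $\|C_{\phi_n}\|\ge(1-|\phi_n(0)|^2)^{-1/2}\to\infty$ because $\phi_n(0)\to 1$). The paper's argument is of a different nature: using the explicit formulas for $\varphi_n^{(k)}(0)$, the growth estimate $|f^{(k)}(z)|\le C(1-|z|^2)^{-(1+2k-2\nu)/2}$ valid in $S_\nu$, and the Fa\`a di Bruno formula, it shows that the Taylor coefficients of any recurrent vector satisfy fixed linear relations ($b_1=0$ when $\nu>-\frac12$, and $b_k$ a prescribed combination of $b_1,\dots,b_{k-1}$ when $\nu>\frac{1-2k}{2}$), so the recurrent vectors project into a proper closed subspace of the span of $\{z,\dots,z^k\}$ and cannot be dense. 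To repair your plan you would need either an argument of this coefficient-constraint type or a lower bound valid simultaneously for all functions in some nonempty open set, not for one test function.
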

\begin{rem}
We can add that in the elliptic automorphism case, the operator $C_\phi$ is rigid on every space $S_\nu.$ Moreover, $C_\phi$ is uniformly rigid if and only if $\phi$ is conjugate to a rational rotation.
\end{rem}

\begin{rem}
     The cases of the parabolic non-automorphisms and of the hyperbolic-parabolic automorphisms  have not been treated correctly in \cite{Karim}, the issue being the bad behavior of functions in these spaces at the boundary, as discussed below. The proofs we present for these cases circumvent this issue. 
     
     In the first case, Theorem 2.8 \cite{Karim} the authors assumed that the recurrent vector they considered in the proof is a function that is continuous at the boundary fixed point, but this is not necessarily the case. It is known that there are functions in the Bergman space $S_{-\frac{1}{2}}$ that do not have radial limits almost everywhere, and, in fact, there is a function in this space that fails to have a limit at every point of the unit circle. 
     
     \par
We have been unable to locate a reference for the statement above, so in lieu of that, we have the following proposition.

 \begin{prop} In each $S_{-\nu}$ with $0<\nu $ there is an $f$ such that for any $e^{i\theta}$ we have that
$$\lim_{r\to 1}f(re^{i\theta}) \hspace{.1in} \text{does not exist}.$$
\end{prop}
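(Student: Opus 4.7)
The plan is to exhibit an explicit lacunary power series
\[
f(z) = \sum_{k=1}^{\infty} a_k\, z^{n_k}
\]
belonging to $S_{-\nu}$ whose modulus is forced to $+\infty$ along every radius. I will pick integers $n_k$ growing super-exponentially, for instance $n_k = 2^{\lceil 2k^{2}/\nu\rceil}$, together with coefficients $a_k = 2^{k^{2}}$. The virtues of this choice are twofold: first, since $|a_k|/|a_{k-1}| = 2^{\,2k-1} \to \infty$, the partial sums satisfy $\sum_{j<k}|a_j| \leq 2|a_{k-1}| = o(|a_k|)$; second, $|a_k|^{2}(n_k+1)^{-2\nu}\lesssim 2^{-2k^{2}}$, which is summable, so that $f \in S_{-\nu}$ is immediate from the definition of the norm on the weighted Hardy space.

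The heart of the argument is an estimate of $|f(r_k e^{i\theta})|$ along the radii $r_k = 1 - 1/n_k$. The $k$-th summand has modulus $|a_k|(1-1/n_k)^{n_k} \sim |a_k|/e$, which tends to infinity. The contribution of the lower-order terms satisfies $\sum_{j<k}|a_j|\, r_k^{n_j}\leq \sum_{j<k}|a_j| = o(|a_k|)$ by the choice above. For the higher-order terms I will use the crude bound $(1-1/n_k)^{n_j}\leq e^{-n_j/n_k}$; since $n_{j}/n_k$ grows super-exponentially in $j-k$, the tail $\sum_{j>k}|a_j|\,e^{-n_j/n_k}$ is itself $o(|a_k|)$. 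The triangle inequality then yields $|f(r_k e^{i\theta})| \geq (1/e + o(1))\,|a_k|$ uniformly in $\theta$, so that $\limsup_{r\to 1^{-}}|f(re^{i\theta})| = +\infty$ at every $\theta \in [0,2\pi)$, which precludes the existence of any finite radial limit at any point of $\mathbb{T}$.

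The principal obstacle is calibrating the growth rates so that the two competing requirements hold simultaneously: the coefficients $a_k$ must grow fast enough that the $k$-th term dominates the entire remainder of the Taylor expansion on the circle $|z|=r_k$, yet slowly enough relative to $n_k^{2\nu}$ to keep the weighted $\ell^{2}$-norm finite. Once this calibration is arranged, the remaining steps are routine geometric-series estimates; the essential input is that super-exponential lacunarity forces the tail at each $r_k$ to be negligible irrespective of the phase factors $e^{i n_j\theta}$, which is exactly why the non-existence of the radial limit propagates to every $\theta$ at once rather than merely to almost every $\theta$.
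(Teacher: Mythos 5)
Your proposal is correct and follows essentially the same route as the paper: both construct a lacunary series in $S_{-\nu}$ and show that along a sequence of radii tending to $1$ a single term dominates the remainder of the expansion uniformly in $\theta$, forcing $\limsup_{r\to 1}|f(re^{i\theta})|=\infty$ at every boundary point (the paper uses coefficients $(m_j+1)^{\nu/2}$ with inductively chosen gaps and annuli $\{1/2\le r^{m_p}\le 3/4\}$, while you use explicit closed-form exponents and the radii $1-1/n_k$). The only point to tidy is that for large $\nu$ your exponents $n_k=2^{\lceil 2k^{2}/\nu\rceil}$ need not be strictly increasing for small $k$, and the tail bound $\sum_{j>k}|a_j|e^{-n_j/n_k}=o(|a_k|)$ is valid only for $k$ sufficiently large --- which is all the conclusion requires.
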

\begin{proof}
    The function which satisfies the requirements is of the form 
    $$f(z)=\sum_{j=1}^{\infty}(m_j+1)^{\nu/2}z^{m_j}$$
    where the sequence $\{m_j\}$ goes to $\infty$ very rapidly. We will choose that sequence as a subsequence of a sequence $\{n_j\}$ which is also rapidly increasing and satisfies
    $$\sum_{j=1}^{\infty}\frac{1}{(n_j+1)^{\nu}}<\infty,$$
    which will ensure that the function $f(z)$ lies in $S_{-\nu}$.
    Set $m_1=n_1$ and consider the annulus
    $$A_1=\{r:\frac{1}{2} \leq r^{m_1}\leq \frac{3}{4}\},$$ which implies that for $r\in A_1$ we have  $(1/2)^{\frac{1}{m_1}}\le{r}\le{(3/4)^{\frac{1}{m_1}}}.$ Let $0<\epsilon<\frac{1}{4}$ 
    and therefore $\sum_{j+1}^{\infty}\epsilon^j<\frac{1}{3}.$
    
    Choose $m_2$ as one $n_k$ with $2\leq k$ such that 
   
    $$(3/4)^{\frac{m_2}{m_1}}(1+m_2)^{\nu/2}<\frac{\epsilon}{4}(1+m_1)^{\nu/2}.$$
    
    This condition we imposed on $m_2$ ensures that $(1+m_1)^{\nu/2}|z|^{m_1}>(1+m_2)^{\nu/2}|z|^{m_2}$ for $z\in{A_1}$.
    The annulus $A_2$ is defined as 
    \[A_2=\{r:\frac{1}{2} \leq r^{m_2}\leq \frac{3}{4}\}.\]
We will define $A_p$ as \[A_p=\{r:\frac{1}{2} \leq r^{m_p}\leq \frac{3}{4}\},\] and we will define $m_p$ for all $p\in{\mathbb{N}}$ such that  
 the term $(m_p+1)^{\nu/2}z^{m_p}$ will be dominant in $f(z)$ when $|z|=r\in A_p.$

To this end, assume that we have chosen $m_1,\cdots,m_p.$ We choose $m_{p+1}$ such that the following two conditions are satisfied for
$1\leq j\leq p$
\begin{eqnarray}
\label{eq:1}(3/4)^{\frac{m_{p+1}}{m_j}}(1+m_{p+1})^{\nu/2}<\frac{\epsilon^{p+1-j}}{4}(1+m_j)^{\nu/2},\end{eqnarray} which will ensure that for $z \in A_j$ the term containing $z^{m_j}$ dominates the term containing $z^{m_{p+1}}$ and so all the following terms. (The ones containing $z^{m_k}$ with $j<k.$)
\begin{eqnarray}
\label{eq:2}\sum_{j=1}^p(1+m_j)^{\nu/2}<\frac{1}{4}(1+m_{p+1})^{\nu/2}, \end{eqnarray} which  will ensure that for $z \in A_{p+1}$ the term containing $z^{m_{p+1}}$ dominates the previous terms. (The ones containing $z^{m_j}$ with $j<p+1.$) 

For $|z|\in A_1$ we have that
\[|f(z)|\geq \frac{1}{2}(1+m_1)^{\nu/2}-\sum_{p=1}^{\infty}(3/4)^{\frac{m_{p+1}}{m_1}}(1+m_{p+1})^{\nu/2}\geq\]
    \[\frac{1}{2}(1+m_1)^{\nu/2}-\sum_{p=1}^{\infty}\frac{\epsilon^p}{4}(1+m_1)^{\nu/2}>\frac{5}{12}(1+m_1)^{\nu/2}\]
For 
\[|z|\in A_{p+1}=\{r:\frac{1}{2} \leq r^{m_{p+1}}\leq \frac{3}{4}\}\]
we have that $$|f(z)|\ge{(1+m_{p+1})^{\nu/2}|z|^{m_{p+1}}-\sum_{j=1}^p{(1+m_j)^{\nu/2}{|z|^{m_j}}}-\sum_{s=2}^p{(1+m_{p+s})^{\nu/2}{|z|^{m_{p+s}}}}}\ge{}$$ 
$$1/2(1+m_{p+1})^{\nu/2}-\sum_{j=1}^p{(1+m_j)^{\nu/2}}-\sum_{s=2}^p{(1+m_{p+s})^{\nu/2}{(3/4)^{\frac{m_{p+s}}{m_{p+1}}}}},$$
where for $r^{m_{p+1}}$ we used in the first term the bound below of $1/2$ and in the third term the bound above of $3/4$, whereas in the middle term we used that $r<1.$ 
By using (\ref{eq:1}) to control the third term and (\ref{eq:2}) to control the middle term, we have that the last expression is bounded below by 

\[\frac{1}{2}(1+m_{p+1})^{\nu/2}-\frac{1}{4}(1+m_{p+1})^{\nu/2}-\sum_{s=2}^{\infty}\frac{\epsilon^{p+s-p-1}}{4}(1+m_{p+1})^{\nu/2}\geq\]
\[\frac{1}{6}(1+m_{p+1})^{\nu/2}.\]
As $p$ goes to infinity, the radii of the annulus $A_p$ converge to $1$, while the lower bound for $|f(z)|$ on $A_p$ goes to $\infty$. This shows that $\displaystyle{\limsup_{r\to 1}|f(re^{i\theta})|=\infty}$ for every $e^{i\theta}\in{\mathbb{T}}$, which proves the proposition. \end{proof}

\begin{rem}
Given that the polynomials are dense in $S_{-\nu}$, the set
\[E=\{h+\frac{1}{n}z^kf: h \in \mathcal{A}, f \text{ like in the proposition and } n,k \in \mathbb N\}\]
is dense in $S_{-\nu}$ , and every function $g$ in it satisfies that $\lim_{r \to 1}g(r\lambda)$ does not exist for $\lambda \in \mathbb T.$ Moreover, let

\[N_{Le}=\{g\in S_{-\nu}:\lim_{r \to 1}g(r\lambda)\text{ does not exist for } \lambda \in \mathbb T\},\]
and
\[N_{Lae}=\{g\in S_{-\nu}:\lim_{r \to 1}g(r\lambda)\text{ does not exist for almost all } \lambda \in \mathbb T\}.\]
Then
\[\{fg:f\in N_{Le}\text{ and }g\in \mathcal{A}, g\neq 0\} \subset N_{Le},\]
and
\[\{fg:f\in N_{Lae}\text{ and }g\in H^{\infty}, g\neq 0\} \subset N_{Lae},\]
and since the Hardy space $S_0$ is dense in $S_{-\nu}$ so is 
$N_{Lae}+S_0.$

It can also be proved that $N_{Le}$ is a residual set \cite{B-S}
\end{rem}

     In the second case, Theorems 2.4 and 2.5 \cite{Karim}, although the recurrent $f$ in the Dirichlet space converges non-tangentially at almost every point in the unit circle, it might not converge at the chosen point 1. It might be possible to fix this problem but rather than doing that, we give a different proof. 
\end{rem}    

    \begin{thm} Let $\phi$ be  a parabolic or a hyperbolic automorphism, then $C_{\phi}$ is recurrent on $S_{\nu}$ if and only if $\nu <\frac{1}{2}.$ 
    \end{thm}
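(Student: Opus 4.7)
The plan is to prove each implication separately. For $\nu < 1/2$, I would invoke the hypercyclicity of $C_\phi$ on $S_\nu$ when $\phi$ is a parabolic or hyperbolic automorphism, as established by Gallardo-Guti\'errez and Montes-Rodr\'iguez \cite{Gallardo-Montes}; since hypercyclicity implies recurrence (as noted in the Introduction), this direction is immediate.

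For $\nu \geq 1/2$, the Comparison Principle (the Remark preceding the theorem) reduces the problem to showing that $C_\phi$ is not recurrent on the Dirichlet space $S_{1/2}$, since $S_\nu \subset S_{1/2}$ continuously for $\nu \geq 1/2$ and non-recurrence propagates from the larger space to the smaller ones. The strategy is to prove that every $h \in \mathrm{Rec}(C_\phi) \subset S_{1/2}$ must be a constant function; this forces $\mathrm{Rec}(C_\phi) \subset \mathbb{C}$, a one-dimensional subspace, which cannot be dense in the infinite-dimensional $S_{1/2}$, contradicting recurrence.

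Fix $h \in \mathrm{Rec}(C_\phi)$ with $C_\phi^{n_k} h \to h$ in $S_{1/2}$, and let $a_j(f)$ denote the $j$-th Taylor coefficient of $f$ at $0$. Each $a_j$ is a bounded linear functional on $S_{1/2}$, so $a_j(h \circ \phi^{n_k}) \to a_j(h)$ for every $j \geq 0$. Fa\`{a} di Bruno's formula expresses $a_j(h \circ \phi^n)$ as a finite sum of terms of the form $c\, h^{(M)}(r_n) \prod_m ((\phi^n)^{(m)}(0))^{\mu_m}$, where $\sum m \mu_m = j$, $\sum \mu_m = M$, and $r_n := \phi^n(0)$ tends to the Denjoy--Wolff point $\omega$ along the radius. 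The argument then rests on two estimates: (i) conjugating $\phi$ to the right half-plane turns it into a dilation $w \mapsto \lambda w$ (hyperbolic case) or a translation $w \mapsto w + t$ (parabolic case), and writing $\phi^n$ explicitly as a linear fractional transformation yields $(\phi^n)^{(m)}(0) = O(1 - r_n)$ for hyperbolic and $O((1-r_n)^2)$ for parabolic, for each fixed $m \geq 1$; (ii) for every $h \in S_{1/2}$, $(1-r)^m |h^{(m)}(r)| \to 0$ as $r \to 1^-$, where the case $m = 1$ follows from $h' \in A^2$ (Bergman) via a sub-mean value estimate on $B(r,(1-r)/2)$, the uniform-in-angle version $\sup_\theta (1-s)|h'(se^{i\theta})| \to 0$ follows from the vanishing of $\int |h'|^2\, dA$ over shrinking annular neighborhoods of $\mathbb{T}$, and this uniform version bootstraps to $m \geq 2$ by iterated Cauchy integral estimates on contours of radius $(1-r)/2$.

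Combining (i) and (ii), each Fa\`{a} di Bruno summand is bounded by $|h^{(M)}(r_n)| \cdot O((1-r_n)^M) \to 0$, so $a_j(h \circ \phi^n) \to 0$ for every $j \geq 1$. Passing to the subsequence $n_k$ gives $a_j(h) = 0$ for all $j \geq 1$, making $h$ constant and completing the contradiction. The main technical obstacle I expect is the uniform-in-angle decay in (ii), needed to bootstrap the submean-value bound beyond $m = 1$; the Fa\`{a} di Bruno combinatorics and the explicit linear-fractional expansion of $\phi^n$ are notationally heavy but routine once (i) and (ii) are in hand.
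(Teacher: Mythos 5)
Your proof is correct, but the core of the hard direction is argued quite differently from the paper. Both proofs dispose of $\nu<\frac12$ via hypercyclicity and of $\nu>\frac12$ via the Comparison Principle after reducing to the Dirichlet space $\mathcal D=S_{1/2}$; the divergence is in how non-recurrence on $\mathcal D$ is established. The paper splits $\mathcal D=\mathcal D_0\oplus\mathbb C$, applies Lemma \ref{partition} to pass to the compression $\tilde C_\phi$ on $\mathcal D_0$, uses the fact (Gallardo-Guti\'errez--Montes-Rodr\'iguez) that $\tilde C_\phi$ is unitary there together with the Costakis--Manoussos--Parissis result that a power-bounded recurrent operator has \emph{every} vector recurrent, and then derives a contradiction from the single test vector $f(z)=z$ against the Denjoy--Wolff theorem. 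You instead test against all Taylor-coefficient functionals and show, via Fa\`a di Bruno plus the two estimates $|(\phi^n)^{(m)}(0)|=O(1-|\phi^n(0)|)$ and $(1-|z|)^M|h^{(M)}(z)|\to 0$ for $h\in\mathcal D$, that every recurrent vector is constant. This is essentially the technique the paper reserves for the parabolic non-automorphism case (Theorem \ref{parabolicnonauto} and Lemma \ref{lem}), transplanted to $\mathcal D$; it avoids the unitarity/power-boundedness input entirely and yields the stronger conclusion $\mathrm{Rec}(C_\phi)=\mathbb C$, at the cost of heavier computation. The paper's route is shorter but leans on two external structural results; yours is self-contained modulo standard Bergman-space growth estimates, and your bootstrapping of the little-oh bound to higher derivatives via Cauchy estimates on disks of radius $(1-|z|)/2$ is exactly right.

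Two harmless slips worth flagging. First, for a parabolic \emph{automorphism} the Schwarz--Pick relation is an equality, so $|(\phi^n)'(0)|=1-|\phi^n(0)|^2$ and the derivative is of exact order $1-r_n$, not $O((1-r_n)^2)$; the quadratic gain you cite occurs only for the parabolic \emph{non}-automorphism (compare the computation of $\varphi_n^{(1)}(0)$ preceding Lemma \ref{lem}). Since your final combination only uses one factor of $1-r_n$ per derivative, nothing breaks. Second, $\phi^n(0)$ does not approach the Denjoy--Wolff point radially (the approach is tangential in the parabolic case), so the estimate in your step (ii) must indeed be the uniform-in-angle version you describe, not the radial one; you supply that version, so the argument stands.
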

\begin{proof}
Since for $v<\frac{1}{2}$, $C_\phi$ is hypercyclic, it follows that in this case $C_\phi$ is recurrent. Let $\nu=\frac{1}{2}$, when $S_\nu=\mathcal{D},$ the classical Dirichlet space.

Claim: No composition operator is recurrent on $D.$ 

\noindent Then, by using the Comparison Principle, these operators cannot be recurrent on $S_\nu$ for $\nu > \frac{1}{2}$. 

Therefore, the theorem will be proved if we prove the claim for $\mathcal {D}.$ Without loss of generality, we can assume that the attractive fixed point of $\phi$ is 1.
    Let $\mathcal{D}_0=\{f \in \mathcal{D}: f(0)=0\}$. Thus,
\[ \mathcal D= \mathcal D_0\oplus C,\]
where the sum is the orthogonal sum and $C$ is the one-dimensional space of constants. Every function $f$ in $\mathcal{D}$ can be uniquely expressed as $f(z) =  \tilde{f}(z)+f(0)$, with $\tilde{f}(0) = 0$.

Since a composition operator acts like the identity on the constants, we can express $C_\phi$ with respect to $\mathcal D= \mathcal D_0\oplus C$  as
\[C_\phi=\begin{pmatrix}
 \tilde{C_\phi}& 0\\
B & I
\end{pmatrix},\]
where $\tilde{C_\phi}(\tilde f)(z)=f(\phi(z)) -f(\phi(0))$
and $B(\tilde f)(z)=f(\phi(0))-f(0).$

Lemma 3.11 says that if $C_\phi$ is recurrent on $\mathcal{D}$, then its compression $\tilde{C}_\phi$ is recurrent on $\mathcal{D}_0$. (In this application of the Lemma, any $\delta\le{\frac{1}{\sqrt{2}}}$ works.)  Moreover, if a vector $f$ is recurrent for $C_\phi$ on $\mathcal{D}$, then the vector $\tilde{f}=f-f(0)$  is recurrent for $\tilde{C_\phi}.$

We are now ready to complete the proof of the theorem.
    Assume that $C_\phi$ is recurrent on $\mathcal{D}$. Then $\tilde{C}_\phi$ is recurrent on $\mathcal{D}_0$. From Proposition 3.2 in \cite{Gallardo-Montes}, we know that the adjoint $(\tilde{C}_\phi)^*=\tilde{C}_{\phi^{-1}}$ is a unitary operator on $\mathcal{D}_0$. By Proposition 3.1 in \cite{Costakis}, a recurrent operator that is power bounded has the property that every vector in its domain is a recurrent vector. This means that every vector in $\mathcal{D}_0$ is a recurrent vector for $\tilde{C}_\phi$. In particular, $f(z)=\tilde{f}(z)=z$ is recurrent for $\tilde{C}_\phi$, that is, there exists an increasing sequence of natural numbers $(n_k)_k$ such that 
    
    \begin{equation} \label{rec}
||\tilde{\phi}_{n_k}(z)-z||^2_{\mathcal{D}_0}\rightarrow{0} \hspace{.05in} \text{as} \hspace{.05in} k\to\infty.
\end{equation}
We now show that $(\phi_{n_k})_k$ converges to $g(z)=z+1$ in $\mathcal{D}$ as $k\to\infty$. Indeed,

    \[||\phi_{n_k}(z)-g(z)||^2_{\mathcal{D}}=|\phi_{n_k}(0)-1|^2+\int_\mathbb{D}|(\phi'_{n_k}(z)-1)|^2dA(z).\]

By the Denjoy-Wolff theorem the first term in the sum above converges to $0$ as $k\to\infty$, while the second term converges to $0$ according to \ref{rec}. 

But convergence in $\mathcal{D}$ implies pointwise convergence, so let $z_0$ be a point in the open unit disk that is different from $0$. We then have $\phi_{n_k}(z_0)\rightarrow{z_0+1}$, which contradicts the conclusion of the Denjoy-Wolff theorem, which states that $\phi_{n_k}(z_0)\rightarrow{1}$. (An additional way of seeing this is that each $\phi_{n_k}$ is  a self map of the disc while its limit $g(z)=z+1$ is not a self map.) This contradiction shows that $C_\phi$ is not recurrent on $\mathcal{D}$.
\end{proof}

\begin{thm} [Parabolic Non-automorphism symbol] \label{parabolicnonauto}

A parabolic non automporphism $C_{\varphi}$ is never recurrent for $S_{\nu}$

    \end{thm}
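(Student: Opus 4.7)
The plan is to begin by normalizing: by conjugating with a disk automorphism (which induces a topological isomorphism of $S_\nu$), we may assume that the Denjoy--Wolff fixed point of $\varphi$ is $z=1$. The Cayley transform $H(z)=(1+z)/(1-z)$ carries $\mathbb{D}$ conformally onto the right half plane $\mathbb{H}$, sending $1$ to $\infty$, and conjugates $\varphi$ to the translation $\tau_a(w)=w+a$ with $\operatorname{Re}(a)>0$ (the non-automorphism condition). A direct computation then gives the explicit iterate formula
$$1-\varphi_n(z)=\frac{2(1-z)}{2+na(1-z)},$$
so that $\varphi_n\to 1$ uniformly on compact subsets of $\mathbb{D}$ at rate $O(1/n)$, and moreover $|\varphi_n(z_1)-\varphi_n(z_2)|=O(1/n^2)$ while $|1-\varphi_n(z_i)|\sim c/n$.

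For $\nu>1/2$ the proof is immediate: $S_\nu\subset\mathcal{A}$, so every $f\in S_\nu$ is continuous on $\overline{\mathbb{D}}$. If $f$ were recurrent for $C_\varphi$ with $C_\varphi^{n_k}f\to f$ in $S_\nu$, then pointwise convergence at each $z_0\in\mathbb{D}$ would give $f(\varphi_{n_k}(z_0))\to f(z_0)$; combining this with continuity of $f$ at $1$ and $\varphi_{n_k}(z_0)\to 1$ forces $f(z_0)=f(1)$ for every $z_0$, so $f$ is constant. Since the space of constants is nowhere dense in the infinite-dimensional $S_\nu$, the set of recurrent vectors cannot be dense, and $C_\varphi$ is not recurrent.

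For $\nu\le 1/2$ this elementary argument fails, because functions in $S_\nu$ need not have radial limits at $1$ (as shown by the proposition preceding this theorem). Here we exploit the tools of the ``An appetizer'' section (p.~52) of \cite{Gallardo-Montes} via the Schwarz-type test functions $f_\gamma(z)=(1-z)^\gamma$, which satisfy $f_\gamma\in S_\nu$ for every $\gamma>\nu-\tfrac{1}{2}$. The iterate formula yields
$$(C_\varphi^n f_\gamma)(z)=\frac{2^\gamma(1-z)^\gamma}{(2+na(1-z))^\gamma},$$
and a direct norm computation using the asymptotic $1-\bigl|\tfrac{na}{2+na}\bigr|^2\sim\tfrac{4\operatorname{Re}(a)}{n|a|^2}$ shows $\|C_\varphi^n f_\gamma\|_{S_\nu}\to 0$. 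Together with the matched asymptotic control of $\|C_\varphi^n\|_{S_\nu}$ that the appetizer supplies, a triangle-inequality perturbation argument shows that no vector in a suitably small $S_\nu$-neighborhood of $f_\gamma$ can be recurrent, so $\operatorname{Rec}(C_\varphi)$ is not dense. The main obstacle is precisely this reconciliation between the decay of $\|C_\varphi^n f_\gamma\|_{S_\nu}$ and the (possibly unbounded) growth of $\|C_\varphi^n\|_{S_\nu}$, which is exactly where the sharp estimates from page~52 of \cite{Gallardo-Montes} are indispensable.
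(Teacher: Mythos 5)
Your argument for $\nu>\tfrac12$ is fine: there $S_\nu\subset\mathcal A$, so every $f\in S_\nu$ is continuous on $\overline{\mathbb D}$, recurrence forces $f\equiv f(1)$, and the constants are nowhere dense. (The paper instead covers all $\nu\ge 0$ at once by citing Theorem 6.13 of \cite{Costakis} for $H^2=S_0$ plus the Comparison Principle.) The genuine gap is in the range $\nu\le\tfrac12$, and in particular $\nu<0$, which is the substance of the theorem. Proving $\|C_\varphi^nf_\gamma\|_{S_\nu}\to 0$ for the test functions $f_\gamma(z)=(1-z)^\gamma$ only shows that the single vector $f_\gamma$ is not recurrent; to disprove recurrence of the operator you must produce an open set containing no recurrent vector. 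Your ``triangle-inequality perturbation argument'' would need the bound $\|C_\varphi^{n}g\|\le\|C_\varphi^{n}f_\gamma\|+\|C_\varphi^{n}\|\,\|g-f_\gamma\|$ to stay small uniformly in $n$ for all $g$ in a fixed ball around $f_\gamma$, and this fails structurally: testing $C_{\varphi_n}^*$ on reproducing kernels gives $\|C_{\varphi_n}\|_{S_\nu}^2\gtrsim(1-|\varphi_n(0)|^2)^{2\nu-1}\sim c\,n^{1-2\nu}\to\infty$ for $\nu<\tfrac12$, so for any fixed $\varepsilon>0$ the perturbation term $\|C_\varphi^{n}\|\varepsilon$ blows up and you cannot exclude that some $g$ near $f_\gamma$ returns to itself along some subsequence. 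You name this ``reconciliation'' as the main obstacle but do not resolve it, and no norm estimate for the iterates can resolve it, because uniform control over a whole ball is exactly uniform boundedness of the iterates on that ball, which is false here. As written, the proof establishes non-recurrence of one vector, not of the operator.

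The paper closes this gap by a different mechanism that avoids operator norms entirely. It uses the coefficient functionals $f\mapsto f^{(k)}(0)$, which are continuous on $S_\nu$, together with the explicit iterates $\varphi_n=\overline{\gamma_n}+\overline{\alpha_n}K_{w_n}$ from page 52 of \cite{Gallardo-Montes} and the pointwise derivative growth estimate $|f^{(k)}(z)|\le C(1-|z|^2)^{-(1+2k-2\nu)/2}$ (Proposition 2.16 there). These combine into Lemma \ref{lem}: $|f^{(k)}(\varphi_n(0))|\,|\varphi_n^{(1)}(0)|^{k}\to 0$ whenever $\tfrac{1-2k}{2}<\nu<0$. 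Feeding this into the Fa\`a di Bruno expansion of $(f\circ\varphi_n)^{(m)}(0)$ shows, by induction, that the Taylor coefficients of any recurrent vector $f=\sum b_jz^j$ satisfy a linear relation expressing $b_k$ in terms of $b_1,\dots,b_{k-1}$; hence the recurrent vectors project into a proper subspace of $\mathrm{span}\{z,\dots,z^k\}$ and cannot be dense. If you want to salvage your approach, you would have to replace the norm perturbation by some such collection of continuous linear constraints satisfied by every recurrent vector; the decay of $\|C_\varphi^nf_\gamma\|$ alone does not suffice.
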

 Theorem 6.13 in \cite{Costakis} states that in this case $C_\varphi$ is not recurrent on $H^2=S_0$, so according to the Comparison Principle, it is not recurrent on $S_\nu$ for all $\nu\ge{0}$. Therefore, we need to consider only when $\nu<0.$
    
Given that recurrence is preserved under similarity, we can assume that \[\varphi_n(z)=\frac{(2-na)z+na}{-naz+2+na},\] where $a=x+iy$ and $y\in \mathbb R$ and $x=Re(a)>0.$ 

We use the notation on page 49 \cite{Gallardo-Montes}.  Thus 
\[\varphi_n=\overline{\gamma_n}+\overline{\alpha_n}K_{w_n}\]
where 
\[\overline{\gamma_n}=\frac{na-2}{na},~~~~\overline{\alpha_n}=\frac{4}{na(na+2)}, ~~~~\overline{w_n}=\frac{na}{na+2}\] and $K_w(z)=(1-\overline {w}z)^{-1}.$ Thus\[\varphi_n(0)=\overline{\gamma_n}+\overline{\alpha_n}=\frac{na}{na+2}.\]
The successive derivatives of $\varphi_n $ are, for $1\leq k$ \[\varphi^{(k)}_n(z)=k!~\overline{\alpha_n}(\overline{w_n})^k(1-\overline{w}z))^{-k-1},\]
and when evaluated at zero
\begin{equation}\label{zeros}\varphi^{(k)}_n(0)=k!\frac{4}{(na+2)^2}\left(\frac{na}{na+2}\right)^{k-1}=k!\varphi^{(1)}_n(0)~\overline{w_n}^{k-1}.\end{equation}
\begin{rem}
 Since   $S_{\nu}\subset Hol(\mathbb D)$ and  $f\in S_{\nu} $ implies that
 $f^{(k-1)}\in S_{\nu-k+1}$, by applying Proposition 2.16 in \cite{Gallardo-Montes}, we have the following estimate where $C$ does not depend on $z$
 \[\left|\frac{df^{(k-1)}}{dz}(z)\right|=|f^{(k)}(z)|\leq C\frac{1}{(1-|z|^2)^{\frac{1+2k-2\nu}{2}}}\]
 whenever $\nu-k+1<\frac{1}{2}$ since $3-2(\nu-(k-1)=1+2k-2\nu.$
\end{rem}
\begin{lem} \label{lem}
For each $f\in S_{\nu}$ and $k=1,2,3,\cdots$
\[\lim_{n\to \infty}|f^{(k)}(\varphi_n(0))||\varphi_n^{(1)}(0)|^{k}=0\] 
whenever $\frac{1-2k}{2}<\nu<0.$
\end{lem}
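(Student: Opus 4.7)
The plan is to combine the pointwise derivative estimate from the remark preceding the lemma with explicit formulas for $\varphi_n(0)$, $\varphi_n^{(1)}(0)$, and $1-|\varphi_n(0)|^2$. Because the hypothesis gives $\nu < 0$, and $k \ge 1$, we automatically have $\nu - k + 1 < 1/2$, so the remark applies to $f^{(k)}$ and yields a constant $C = C(f,k,\nu)$ such that
\[
|f^{(k)}(\varphi_n(0))| \;\le\; \frac{C}{\bigl(1 - |\varphi_n(0)|^2\bigr)^{(1+2k-2\nu)/2}}.
\]

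Next I would substitute the explicit data from the formulas displayed just before the lemma. Since $\varphi_n^{(1)}(0) = \overline{\alpha_n}\,\overline{w_n} = 4/(na+2)^2$, one gets $|\varphi_n^{(1)}(0)|^k = 4^k/|na+2|^{2k}$. Writing $a = x+iy$ with $x > 0$, a direct computation gives
\[
1 - |\varphi_n(0)|^2 \;=\; 1 - \left|\tfrac{na}{na+2}\right|^2 \;=\; \frac{4nx+4}{|na+2|^2}.
\]
Plugging these into the derivative bound and collecting exponents, the product $|f^{(k)}(\varphi_n(0))|\,|\varphi_n^{(1)}(0)|^k$ is dominated by a constant multiple of
\[
\frac{|na+2|^{1-2\nu}}{(4nx+4)^{(1+2k-2\nu)/2}}.
\]

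Finally, as $n \to \infty$ we have $|na+2| \sim n|a|$ and $4nx+4 \sim 4nx$, so the bound above is asymptotic to a positive constant times $n^{(1-2k-2\nu)/2}$. The hypothesis $(1-2k)/2 < \nu$ is precisely the statement that the exponent $(1-2k-2\nu)/2$ is negative, hence the quantity tends to $0$ and the lemma is proved. The only step requiring care is the bookkeeping of the exponents in the simplification; the analytic content is already packaged inside the derivative estimate of the remark, which itself rests on Proposition 2.16 of \cite{Gallardo-Montes}. The role of the upper constraint $\nu < 0$ is not felt in this computation; it is the lower constraint $\nu > (1-2k)/2$ that provides the decay.
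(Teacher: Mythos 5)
Your proposal is correct and follows essentially the same route as the paper: apply the derivative estimate from the preceding remark at $z=\varphi_n(0)$, substitute the explicit formulas for $\varphi_n^{(1)}(0)$ and $1-|\varphi_n(0)|^2=4(nx+1)/|na+2|^2$, and observe that the resulting bound decays like $n^{(1-2\nu-2k)/2}$, which is negative in the exponent precisely when $\nu>(1-2k)/2$. The exponent bookkeeping matches the paper's computation exactly.
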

\begin{proof}
Let $a=x+iy$ with $Re (a)=x>0.$ Then $|2+na|^2=(nx+2)^2+n^2y^2$ and $|2+na|^2-n^2|a|^2=4(nx+1)$. 

By using the values of  $\varphi_n(0), ~~\varphi_n^{(1)}(0)$ already obtained and the last remark
\[|f^{(k)}(\varphi_n(0))||\varphi_n^{(1)}(0)|^{k}\leq \frac{C}{(1-|\frac{na}{na+2}|^2)^{\frac{1+2k-2\nu}{2}}}\frac{4^k}{|na+2|^{2k}}=\]
\[4^kC\frac{|na+2|^{1-2\nu}}{(|na+2|^2-|na|^2)^{\frac{1+2k-2\nu}{2}}}\leq
     4^kC\frac{((nx+2)^2+n^2y^2)^{\frac{1-2\nu}{2}}}{(4(nx+1))^{\frac{1+2k-2\nu}{2}}}\leq M (nx)^{\frac{1-2\nu-2k}{2}}\]
where the last inequality holds if $M$ is large enough. The right hand-side goes to 0 when $n$ goes to infinity, and this completes the proof of the lemma.     
\end{proof}

We now have the tools for proving Theorem \ref{parabolicnonauto}.

\begin{proof}
The plan is the following: 

(i) Use the first derivative to see that the conclusion is true for
$-\frac{1}{2}<\nu.$ 

(ii) Use the second derivative to see that the conclusion is true for
$-\frac{3}{2}<\nu.$

(iii) Use the k-th derivative  to see that the conclusion is true for
$-\frac{2k-1}{2}<\nu.$

Let $f=\sum_{j=0}^{\infty}b_jz^j \in S_{\nu}$ be a recurrent vector for $C_{\varphi}.$

To prove (i) let $-\frac{1}{2}<\nu.$ Then, using Lemma \ref{lem} for $k=1$
\[b_1=\lim_{n\to \infty} (f\circ \varphi_n)^{(1)}(0)=0.\] 
This shows that the orthogonal projection of the set of the recurrent vectors on the space generated by $\{z\}$ is zero.

To prove (ii)
let $-\frac{3}{2}<\nu.$ 
We will show that $C_{\varphi}$ is not recurrent because

Claim: {\it The orthogonal projection on the space generated by $\{z,z^2\}$ of the recurrent vectors of $S_{\nu}$ is at most one dimensional.}

By recurrence of $f$ we have that
\[\lim_{n\to \infty}(f\circ \varphi_n)^{(1)}(0)=b_1
\text{  and   }\lim_{n\to \infty}(f\circ \varphi_n)^{(2)}(0)=2b_2\]
But
\[(f\circ \varphi_n)^{(2)}(z)=f^{(2)}(\varphi_n(z))(\varphi_n^{(1)}(z))^2+f^{(1)}(\varphi_n(z))\varphi_n^{(2)}(z)\]
Taking the limit for $n\to \infty$ and evaluating in 0 and using Lemma 2.3 and remembering that $\varphi_n^{(2)}(0)=\varphi_n^{(1)}(0)\overline{w_n}$ and $\overline{w_n} \to 1,$ we have that $2b_2=b_1$ proving the claim.

To prove (iii) let $\frac{1-2k}{2}<\nu.$
 We will show that $C_{\varphi}$ is not recurrent because 

 Claim: {\it The orthogonal projection of the
recurrent vectors on the space generated by $\{z,z^2,\cdots,z^k\}$ is at most $k-1$ dimensional.}

We will use the
Fa\`a di Bruno formula for the $m$th derivative of the composite of two functions, see, for instance, [Wolfram MathWorld]. In this case $f(\varphi_n(z)),$  the evaluations are for $z=0,$ and we use \ref{zeros} to put all the derivatives of $\varphi_n^{(s)}(0)$ in terms of $\varphi_n^{(1)}(0)$ and powers of $\overline {w_n}.$
\[(f\circ \varphi_n)^{(m)}(0)=\sum_{s=0}^m f^{(s)}(\varphi_n(0))(\varphi_n^{(1)}(0))^s\frac{m!}{j_1!\cdots j_m!}
\prod_{u=1}^m\left(\frac{1}{u!}\right)^{j_u}\overline {w_n}^{m-s}\]
where   $\sum_{u=1}^mj_u=s$ and $\sum_{u=1}^muj_u=m.$

On the other hand the recurrence of $f$ implies that
\[m!b_m=\lim_{n\to \infty} (f\circ \varphi_n)^{(m)}(0).\]

For $1\leq s \leq m$ we have that $\overline {w_n}^{m-s} \to 1$ when $n \to \infty.$ Since we are assuming that $f$ is recurrent, by induction we have that \[\lim_{n\to \infty} f^{(s)}(\varphi_n(0))(\varphi_n^{(1)}(0))^s=\text{linear combination of }b_1,\cdots, b_s\]
for any $1\leq s \leq m.$

When $m=k$  Lemma \ref{lem} says that 
\[\lim_{n\to \infty} f^{(k)}(\varphi_n(0))(\varphi^{(1)}_n(0))^{k}=0\]
which implies that
\[b_k=\text{linear combination of }b_1,\cdots, b_{k-1}.\]
This proves the claim and therefore the proof of the theorem is completed. \end{proof}

   To conclude, we discuss the recurrence of composition operators on the disk algebra $\mathcal{A}$ and on the algebra $H^{\infty}.$ While no composition operator on $\mathcal{A}$ can be hypercyclic  since such operators have bounded orbits (see \cite{Bulancea} for a discussion of the cyclic behavior of composition operators on $\mathcal{A}$ induced by LFTs), and we cannot talk about cyclicity of operators on $H^\infty$ because  $H^\infty$ is not separable, these operators could still be recurrent on these spaces. The result below shows that no operator that is not an elliptic automorphism is recurrent. It is worth emphasizing that the result no longer assumes an LFT symbol. In the following discussion, $A$ will denote either one of these two spaces.  
   The result we present relies on the characterization of surjective isometries for uniform algebras that appears in Hoffman \cite{Hoffman} on page 147.
   \begin{thm} \cite{Hoffman}
   Let $X$ be a compact Hausdorff space and let $A$ be a complex linear subalgebra of $C(X)$, the algebra of continuous complex-valued functions on $X$. Assume that $A$ contains the constant function $1$. Suppose $T$ is a one-one map of $A$ onto $A$ which is isometric:
   $$||Tf||_\infty=||f||_\infty.$$
   Then $T$ has the form 
   $$Tf=\alpha\phi{f}$$
   where $\alpha$ is a fixed function in $A$ which is of modulus $1$, $\frac{1}{\alpha}$ is in $A$, and $\phi$ is an algebra automorphism. In particular, if $T(1)=1$, then $T$ is multiplicative.
   \end{thm}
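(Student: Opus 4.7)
\emph{Plan.} This is the classical Arens--Kelley--Nagasawa characterization of surjective linear isometries of unital uniform algebras, and the natural route is through the geometry of the dual unit ball. The key external input is the Arens--Kelley theorem: the extreme points of the closed unit ball of $A^*$ are precisely the functionals $\lambda \cdot \mathrm{ev}_x$ with $\lambda \in \mathbb{T}$ and $x$ in the Choquet boundary $\partial_C A \subset X$. Since $T$ is a surjective linear isometry, its adjoint $T^*:A^* \to A^*$ is a surjective linear isometry and therefore permutes the extreme points of $\mathrm{ball}(A^*)$.

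The first step is to exploit this to produce functions $\mu : \partial_C A \to \mathbb{T}$ and $\sigma : \partial_C A \to \partial_C A$ such that $T^* \mathrm{ev}_x = \mu(x)\, \mathrm{ev}_{\sigma(x)}$ for every $x \in \partial_C A$, which translates into the pointwise identity
\[
(Tf)(x) = \mu(x)\, f(\sigma(x)), \qquad x \in \partial_C A, \ f \in A.
\]
Taking $f = 1$ yields $\alpha(x) := T(1)(x) = \mu(x)$, so $|\alpha| \equiv 1$ on $\partial_C A$. Running the argument with $T^{-1}$ in place of $T$ shows that $\sigma$ is a bijection and produces an analogous representation for the inverse isometry.

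Next I would promote this to invertibility in $A$. The representation above implies that the spectrum of $\alpha$, computed in the uniform-algebra closure of $A$, is contained in $\mathbb{T}$, since on the Choquet boundary (which determines the Gelfand spectrum) $\alpha$ takes only unimodular values. Because $z \mapsto 1/z$ is holomorphic on a neighborhood of $\mathbb{T}$, the holomorphic functional calculus for uniform algebras gives $1/\alpha \in A$. Combining $\|\alpha\|_\infty = \|1/\alpha\|_\infty = 1$ with the identity $\alpha \cdot (1/\alpha) = 1$ on $X$ forces $|\alpha(x)| = 1$ everywhere on $X$.

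With $\alpha$ invertible in $A$, define $\phi(f) := \alpha^{-1} T(f)$. Then $\phi$ is a unital surjective linear isometry of $A$, and the identity from the first step becomes $\phi(f)(x) = f(\sigma(x))$ on $\partial_C A$. For any $f, g \in A$ and $x \in \partial_C A$,
\[
\phi(fg)(x) = f(\sigma(x))\, g(\sigma(x)) = \phi(f)(x)\, \phi(g)(x).
\]
Since $\partial_C A$ is a boundary for $A$, two elements of $A$ agreeing on $\partial_C A$ coincide on $X$, so $\phi(fg) = \phi(f)\phi(g)$ globally, making $\phi$ an algebra automorphism. The decomposition $Tf = \alpha \cdot \phi(f)$ is the desired form, and the case $T(1)=1$ gives $\alpha = 1$, so $T = \phi$ is multiplicative. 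I expect the main obstacle to be the invertibility step: passing from unimodularity on the Choquet boundary to the global statements $1/\alpha \in A$ and $|\alpha|\equiv 1$ on $X$ requires the functional-calculus/maximal-ideal-space machinery for uniform algebras, whereas the remaining steps are essentially formal once the Arens--Kelley representation is in hand.
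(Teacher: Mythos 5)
First, note that the paper does not prove this statement at all: it is quoted from Hoffman \cite{Hoffman} (p.~147), where it is the de Leeuw--Rudin--Wermer theorem, so your attempt can only be measured against the standard proof there. That proof runs along exactly the lines you set up --- the Arens--Kelley description of the extreme points of the dual ball, the induced permutation $T^*\mathrm{ev}_x=\mu(x)\,\mathrm{ev}_{\sigma(x)}$, and the resulting representation $(Tf)(x)=\alpha(x)f(\sigma(x))$ on the Choquet boundary with $\alpha=T1$ --- and up to that point your argument is sound.

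The gap is precisely where you predicted it: the invertibility of $\alpha$. The assertion that the Choquet boundary ``determines the Gelfand spectrum,'' so that $|\alpha|\equiv 1$ there forces $\mathrm{sp}(\alpha)\subset\mathbb{T}$, is false. The Choquet (equivalently, \v{S}ilov) boundary determines the \emph{norm} of the Gelfand transform, and the most one gets is $\partial\bigl(\mathrm{sp}(\alpha)\bigr)\subset\hat{\alpha}(\text{\v{S}ilov boundary})\subset\mathbb{T}$, which leaves open the possibility $\mathrm{sp}(\alpha)=\overline{\mathbb{D}}$. A concrete counterexample to your inference is the disk algebra realized inside $C(\mathbb{T})$ with $\alpha(z)=z$: this function is unimodular on the entire Choquet boundary $\mathbb{T}$, yet its spectrum is $\overline{\mathbb{D}}$ and it is not invertible. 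So the holomorphic functional calculus cannot be invoked, and even if it could, it would only place $1/\alpha$ in the uniform closure of $A$ rather than in $A$ itself, as the theorem requires. The missing ingredient is the surjectivity of $T$, which your invertibility step never uses. The standard repair: from the boundary representation, $T(f)T(g)$ and $T(fg)\,T(1)$ agree at every point of the Choquet boundary; both lie in the algebra $A$, and the Choquet boundary is a boundary, so $T(f)T(g)=T(fg)\,\alpha$ on all of $X$. Choosing $g\in A$ with $Tg=1$ and taking $f=g$ gives $1=T(g^{2})\,\alpha$, whence $1/\alpha=T(g^{2})\in A$; moreover $\|1/\alpha\|_\infty=\|g^{2}\|_\infty\le\|g\|_\infty^{2}=\|Tg\|_\infty^{2}=1$ together with $\|\alpha\|_\infty=1$ forces $|\alpha|\equiv 1$ on $X$. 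With this replacement, your remaining steps (setting $\phi=\alpha^{-1}T$ and verifying multiplicativity on the Choquet boundary) go through as written.
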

   When we apply this theorem to the algebra $A$ which is $H^\infty$, or $\mathcal{A}$, we obtain that the surjective isometry $T$ has the form 
   $$(Tf)(\lambda)=\alpha{f(\tau(\lambda))},$$
   for $f\in{A}$, where $\alpha$ is a constant of modulus $1$ and $\tau$ is a conformal of the open disk onto itself, i.e. $\displaystyle{\tau(z)=e^{i\theta}\frac{z-a}{1-\bar{a}z}}$, with $\theta\in{[0, 2\pi)}$ and $a\in\mathbb{D}$. Notice that $\tau$ is continous on $\overline{\mathbb{D}}$.
   \par We will also need Proposition 3.2 in \cite{Costakis} which describes properties of power bounded recurrent operators. 
    \begin{thm} No composition operator on $A$ induced by a non-constant self-map of $\mathbb{D}$ which is not an elliptic automorphism is recurrent on $A$.
    \end{thm}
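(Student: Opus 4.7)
The plan is to use the power boundedness of $C_\phi$ to promote recurrence to ``every vector is recurrent'', then extract from the particular test vector $f_0(z)=z$ a subsequence of iterates of $\phi$ converging pointwise to the identity, and finally derive a contradiction from the Denjoy--Wolff theorem.

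First I would note that $\|C_\phi f\|_\infty = \sup_{z \in \mathbb{D}} |f(\phi(z))| \leq \|f\|_\infty$, so $C_\phi$ is a contraction on $A$ and in particular is power bounded. Assume, toward a contradiction, that $C_\phi$ is recurrent. Applying Proposition 3.2 of \cite{Costakis} (every power bounded recurrent operator $T$ satisfies $\mathrm{Rec}(T)=X$), every $f \in A$ is a recurrent vector for $C_\phi$. In particular the coordinate function $f_0(z)=z$, which belongs to both $\mathcal{A}$ and $H^\infty$, is recurrent. Thus there exists a strictly increasing sequence of positive integers $(n_k)_k$ with $\|C_\phi^{n_k}f_0 - f_0\|_\infty = \|\phi_{n_k}-z\|_\infty \to 0$, where $\phi_n$ denotes the $n$-th iterate of $\phi$.

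Next, uniform (hence pointwise) convergence of $\phi_{n_k}$ to the identity on $\mathbb{D}$ (or on $\overline{\mathbb{D}}$ in the case of $\mathcal{A}$) yields $\phi_{n_k}(z_0) \to z_0$ for every $z_0 \in \mathbb{D}$. On the other hand, since $\phi$ is a non-constant self-map of $\mathbb{D}$ which is not an elliptic automorphism, the Denjoy--Wolff theorem supplies a unique point $w \in \overline{\mathbb{D}}$ such that the full sequence $\phi_n(z_0)$ converges to $w$, uniformly on compact subsets of $\mathbb{D}$. In particular the subsequence $\phi_{n_k}(z_0) \to w$ as well, so combining both limits forces $z_0 = w$ for \emph{every} $z_0 \in \mathbb{D}$, which is impossible.

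This contradiction will show that $C_\phi$ cannot be recurrent. The role of the Hoffman characterization is conceptual here: it identifies disk automorphisms as the only sources of surjective isometries of $A$, which is consistent with the conclusion that only elliptic automorphisms admit recurrence among composition operators on $A$. The proof itself is short, and the main subtlety lies in choosing a good single test recurrent vector: the coordinate function $f_0(z)=z$ packages just enough information about $\phi$ for the Denjoy--Wolff obstruction to bite, and no diagonal or common-subsequence argument (which would be delicate in the non-separable space $H^\infty$) is needed.
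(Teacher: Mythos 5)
Your proof is correct, and it takes a genuinely more direct route than the paper's. Both arguments open identically ($C_\phi$ is a contraction, hence power bounded, so Proposition 3.2 of \cite{Costakis} upgrades recurrence of the operator to recurrence of every vector) and both close with the same Denjoy--Wolff contradiction; the difference is the middle. The paper first uses part (i) of that proposition to conclude that $C_\phi$ is a surjective isometry, then invokes Hoffman's characterization \cite{Hoffman} of surjective isometries of uniform algebras to write $C_\phi f=\alpha\, f\circ\tau$ with $\tau$ a conformal automorphism of $\mathbb{D}$, and only then tests recurrence on the vector $\tau$, pitting $\tau_{n_k+1}\to\tau$ against $\tau_n\to p$. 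You bypass Hoffman entirely by testing recurrence on the coordinate function $f_0(z)=z$, which lies in both $\mathcal{A}$ and $H^\infty$, so that $\|C_\phi^{n_k}f_0-f_0\|_\infty=\|\phi_{n_k}-\mathrm{id}\|_\infty\to 0$; since a non-constant $\phi$ that is not an elliptic automorphism is in particular not the identity, Denjoy--Wolff forces $\phi_n$ to converge locally uniformly to a single point of $\overline{\mathbb{D}}$, which is incompatible with $\phi_{n_k}\to\mathrm{id}$. (Your observation that no diagonal argument is needed is also right: the ``power bounded plus recurrent implies every vector is recurrent'' step does not require separability, so the non-separability of $H^\infty$ causes no trouble.) What your shortcut gives up is the structural byproduct of the paper's route --- that the symbol of a recurrent composition operator on $A$ must be a disk automorphism --- which is precisely what makes the Hoffman-based argument portable to endomorphisms of general uniform algebras, as the authors note in their closing remark; there, no distinguished coordinate function is available to test against. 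Your version is shorter but tied to these two concrete algebras.
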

    \begin{proof}
    Let $C_\phi$ be a composition operator on $A$ induced by the analytic self-map $\phi$ of $\mathbb{D}$. Given that the identity function on $\mathbb{D}$ is in $A$, $\phi$ is an element of ${A}$. The operator $C_\phi$ is a contraction on $A$, $||C_\phi(f)||\le{||f||}$ for all $f\in{A}$. Assume that $C_\phi$ is recurrent. By part (i) of Proposition 3.2 in \cite{Costakis}, it follows that $C_\phi$ is a surjective isometry. Therefore, by the theorem in \cite{Hoffman} cited above, we have that  $(C_\phi{f})(z)=\alpha{f(\tau(z))}$, where $\alpha$ is a constant of modulus $1$ and $\tau$ is a conformal automorphism of $\mathbb{D}$ onto itself, from which, using Proposition 2.3 in \cite{Costakis}, $C_\tau$ is recurrent. Also, by part (v) of Proposition 3.2 in \cite{Costakis}, Rec$(C_\tau)=A$. In particular, $\tau$ is a recurrent vector for $C_\tau.$ According to the Denjoy-Wolff theorem, there is a point $p$ in the closed unit disk such that $\tau_n\rightarrow{p}$ uniformly on compact sets, which contradicts the fact that $\tau$ is a recurrent vector for $C_\tau.$ Therefore, $C_\phi$ cannot be recurrent on $A$.
    \end{proof}
    \begin{rem} This approach could potentially be used to study the recurrence of endomorphisms of uniform algebras, which are composition operators whose symbol is a self-map of the maximal ideal space of the respective algebra.
    \end{rem}

\end{document}